\begin{document}

\newtheoremstyle{basico}% name of the style to be used
  {0,2cm}% measure of space to leave above the theorem. E.g.: 3pt
  {}% measure of space to leave below the theorem. E.g.: 3pt
  {\itshape}% name of font to use in the body of the theorem
  {0,5cm}% measure of space to indent
  {\bfseries}% name of head font
  {}% punctuation between head and body
  {0,2cm}% space after theorem head; " " = normal interword space
  {\thmname{#1}\thmnumber{ #2}:\thmnote{ #3}}% Manually specify head
\theoremstyle{basico}  
  
\newtheorem{teo}{Theorem}[section]
\newtheorem{coro}[teo]{Corollary}
\newtheorem{lema}[teo]{Lemma}
\newtheorem{prop}[teo]{Propostion}
\newtheorem*{defn}{Definition}
\newtheorem{obs}[equation]{Observation}
\newtheorem{ejem}[equation]{Example}
\newtheorem{obsnot}{Notation}
\newtheorem{afim}{Afirmation}
\newtheorem{teoprin}{Theorem}

\newcommand{\noi}{\noindent}

\newcommand{\La}{\Lambda}
\newcommand{\s}{\sigma}
\newcommand{\al}{\alpha}
\newcommand{\de}{\delta}
\newcommand{\e}{\epsilon}
\newcommand{\ga}{\gamma}
\newcommand{\be}{\beta}

\newcommand{\la}{\Lambda}
\newcommand{\N}{\mathbb{N}}
\newcommand{\Z}{\mathbb{Z}}
\newcommand{\R}{\mathbb{R}}
\newcommand{\A}{\mathcal{A}}
\newcommand{\B}{\mathcal{B}}
\newcommand{\U}{\mathcal{U}}

\title{H\"older stability for $C^r$ central translations}
\date{\today}
\author{Javier Correa\\
Enrique R. Pujals}
\maketitle

\abstract{We consider the class of diffeomorphisms of a manifold that its differential keeps invariant  a one-dimensional subbundle $E$. For that type of diffeomorphisms is naturally defined a one-parameter family called $E-$translation. We prove that if a diffeomorphisms in above mentioned class is conjugate to its $E-$translation and the conjugacy is at distance $\alpha$-H\"older to the identity respect to the parameter and   $\alpha>1/2$, then the   $E$-direction is hyperbolic.

 This theorem is also sharp as it is be discussed with some examples. We also deal with the continuously stable case in the Skew-Products context with one-dimensional fibers, requiring extra hypothesis  along the fibers   like either non-negative second derivative or negative Schwartzian.}

\setlength{\parskip}{0,2cm}
\setlength{\parindent}{0.5cm}

%%%%%%%%%%%%%%%%%%%%%%%%%%%%%%%%%%%%%%%%%%%%%%%%%%%%%%%%%%%%%%%%%%%INTRODUCCION%%%%%%%%%%%%%%%%%%%%%%%%%%%%%%%%%%%%%%%%%%%%%%%%%%

\section{Introduction}
One goal in Dynamical Systems is to describe open sets in the space of diffeomorphisms of a manifold. One of that opens space is characterized by the  stable ones. Given  a $C^r$ Riemannian manifold $M$ and  a diffeomorphisms $f:M\to M$, it is said that $f$ is $C^r$ structural stable, if there exists a neighborhood $\U(f)$ of $f$ in the $C^r$-topology such that for every $g\in \U(f)$ there is  an homeomorphism $\varphi:M\to M$ verifying $\varphi \circ f = g \circ \varphi$. If it is required that the conjugacy only holds between the corresponding non-wandering sets, it is said that $f$ is $\Omega-$stable. In the study of such a question S. Smale introduced the concept of hyperbolicity in \cite{Sm67}. J. Palis and S. Smale conjectured in \cite{PaSm68} that a diffeomorphisms is  $C^r$ structural stable  if and only if  it is Axiom A (the non-wandering set is hyperbolic and the periodic points are dense) with strong transversality and it is $ C^r$ $\Omega-$stable diffeomorphisms if and only if it is an  Axiom A with no cycles. Both conjectures were proved in the $C^1$ topology; for the first one (structural stability) the converse was proved by Robinson in \cite{Ro76} and the direct part by R. Ma\~n\'e in  \cite{Ma88}; for the second one ($\Omega-$stability) the direct was proved by J. Palis in \cite{Pa88} and the converse by S. Smale in \cite{Sm70}. The direct part remains wide open in the the $C^r$ topology for $r>1$ (see \cite{Pu08}). One way to address this problem,  is by assuming  a stronger notion of stability which  requires some regularity in the variation of the conjugacy $\varphi$ according to the variation of $g$. More precisely, it is said that $f$ is absolute stable if there exist $C>0$ such that $d(\varphi, Id) \leq C d(f,g)$. Using implicit function arguments,  J. Robbin in \cite{Ro71}  concluded that Axiom A plus strong transversality implies absolute stability. The converse was proved in the $C^1$ topology by J. Franks in \cite{Fr72} and J. Guckenheimer in \cite{Gu72}. In the $C^r$ topology context the converse was proved by R. Ma\~n\'e in \cite{Ma75}. Since hyperbolicity implies a $``C^1 "$ regularity, an open question regarding this approach is to wonder about lower  regularity  than Lipschitz. In this context nothing is known.

In the present article we deal with that type of problem using H\"older regularity instead of Lipschitz, meaning that it is required that  $d(\varphi, Id) \leq  d(f,g)^\alpha$ for some $\alpha<1$. We will restrict ourselves  to a particular class of diffeomorphisms that preserves a one-dimensional subbundle and we will consider only certain type of perturbations instead of any possible perturbation in a neighborhood of an initial system. In this sense, we only assume stability for that type of perturbations. Latter, we apply that framework to the class of partially hyperbolic diffeomorphisms.

Let $\la$ be a compact invariant transitive set such that it has an $Df$-invariant one dimensional sub-bundle $E$. We define the following concepts:

\begin{enumerate}
\item We say that $\la$ is {\em $E$-orientable} if there exist $X^E$ a continuous vector field defined on $\la$ such that $X^E(x)\in E(x)$ and $|X^E(x)|= 1$ for all $x\in \la$.  
\item We say that $f$  {\em preserves the $E$-orientation} of $\la$ if there exist $a:\la \to \R$ a continuous map such that $Df_x(X^E(x))= a(x)X^E(f(x))$ and $a$ is a positive map.
\item An  {\em $E$-translation} is a $C^r$ flow $\psi_s:M\to M,$ $s\in (-\epsilon,\epsilon)$, associated to a $C^r$ vector field $X$ such that the map $<X^E,X>:\la\to \R$ is positive.
\item A perturbation by a  {\em $C^r$ $E$-translation} is a one-parameter family of $C^r$ diffeomorphisms $f_s$ such that $f_s = \psi_s \circ f$ where $\psi_s$ is a $C^r$ $E$-translation.
\item We say that $\la$ is  {\em stable by $C^r$ $E$-translations} if there exist a perturbation by a $C^r$ $E$-translation such that for every $s\in (-\epsilon,\epsilon)$ there exist $\varphi_s:\la\to M$ which verifies:
\[ \varphi_s \circ f = f_s \circ \varphi_s. \] Observe that it is not required that $\varphi_s$ is unique.
\item We say that $\la$ is  {\em $\alpha$-stable by $C^r$ $E$-translations} if there exist a perturbation by a $C^r$ $E$-translation for which is stable 
and also there exist $C>0$ such that
\[d(i,\varphi_s) \leq C|s|^\alpha, \]
where $i:\la \to M$ is the inclusion. Observe that is not required any regularity in the sub-bundle $E$ neither is required that the perturbation preserves such sub-bundle or a continuation of it.
\item We say that $\la$ is  {\em $E$-hyperbolic} if one of the following happens:
\[ lim_{n\in \N}\left\|Df^n_{|E(x)}\right\|=0 \quad \forall x\in \la,\]
or
\[ lim_{n\in \N}\left\|Df^{-n}_{|E(x)}\right\|=0 \quad \forall x\in \la.\]
\end{enumerate}

We are now in condition to enunciate the main theorem of this article.

\begin{teoprin} \label{teoPrin}
Let $f$ be a $C^r$ diffeomorphism with $r\geq 2$ and $\la$ a compact invariant and transitive set with a one dimensional $Df$-invariant sub-bundle $E$. If $\la$ is $E$-orientable, with orientation preserved by $f$ and $\la$ is $\alpha$-stable by $C^r$ $E$-translations with $\alpha> 1/2$ then $\la$ is $E$-hyperbolic.
\end{teoprin}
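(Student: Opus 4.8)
The first move is to translate $E$-hyperbolicity into a statement about the positive scalar cocycle $a$ determined by $Df_x(X^E(x))=a(x)X^E(f(x))$. Since $E$ is one-dimensional, $\|Df^n_{|E(x)}\|=\prod_{k=0}^{n-1}a(f^k(x))=:a_n(x)$, and $\log\sup_{x}\|Df^n_{|E(x)}\|$ is subadditive; a standard argument using compactness of $\la$, transitivity, and this subadditivity shows that $\inf_n\tfrac1n\log\sup_x\|Df^n_{|E(x)}\|=\sup_\mu\int_\la\log a\,d\mu$, the supremum over $f$-invariant $\mu$ (and likewise for $Df^{-1}$), and hence that $\la$ is $E$-hyperbolic \emph{iff} $0\notin\{\int_\la\log a\,d\mu:\mu\ \text{$f$-invariant}\}$ (this set being an interval by convexity of the invariant measures). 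So assume, for contradiction, that $\la$ is not $E$-hyperbolic; then there is an invariant $\mu_0$ with $\int\log a\,d\mu_0=0$. Using transitivity, continuity of $a$, and — when an ergodic zero-exponent measure is available — Atkinson's recurrence theorem applied to the Birkhoff sums of $\log a$, one produces a point $x^*\in\la$ and times $n_k\to\infty$ with $c_1\le a_{n_k}(x^*)\le c_2$ for fixed $0<c_1\le c_2<\infty$. In particular $\sum_{j\ge1}a_j(x^*)^{-1}\ge\sum_k a_{n_k}(x^*)^{-1}=+\infty$; this divergence is what the argument exploits.

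Now fix a small $s$, write $x^*_j=f^j(x^*)$ and $y_j=\varphi_s(f^j(x^*))=f_s^j(\varphi_s(x^*))$, so that $y_{j+1}=\psi_s(f(y_j))$ and, by $\al$-stability, $d(x^*_j,y_j)\le C|s|^{\al}$ for all $j\ge0$. In exponential coordinates along $(x^*_j)$ put $v_j=\exp_{x^*_j}^{-1}(y_j)$, so $\|v_j\|\le C'|s|^{\al}$ for every $j$. Here $r\ge2$ is used: the $C^r$ Taylor expansion of $f$ (quadratic remainder) and of the flow, $\psi_s(q)=q+sX(q)+\mathcal O(s^2)$ uniformly on a compact neighbourhood of $\la$, yield
\[ v_{j+1}=Df_{x^*_j}v_j+sX(x^*_{j+1})+R_j,\qquad \|R_j\|\le K\big(|s|^{2\al}+|s|^{1+\al}+s^2\big)\le K'|s|^{2\beta}, \]
with $\beta=\min(\al,1)>1/2$ and $K'$ uniform in $j,s$. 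Decompose $v_j=u_jX^E(x^*_j)+w_j$ with $w_j\perp X^E(x^*_j)$, $u_j=\langle X^E(x^*_j),v_j\rangle$; taking $\langle X^E(x^*_{j+1}),\cdot\rangle$ of the recursion and using invariance of $E$ gives $u_{j+1}=a(x^*_j)u_j+F_j$ with $F_j=s\langle X^E,X\rangle(x^*_{j+1})+\langle X^E(x^*_{j+1}),Df_{x^*_j}w_j\rangle+\mathcal O(|s|^{2\beta})$. Unrolling, $u_n=a_n(x^*)\big(u_0+\sum_{j=1}^{n}F_j/a_j(x^*)\big)$, hence $u_0=u_n/a_n(x^*)-\sum_{j=1}^{n}F_j/a_j(x^*)$.

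Compare scales along $n=n_k\to\infty$, $s$ fixed and small. The signal $s\sum_{j\le n}\langle X^E,X\rangle(x^*_j)/a_j(x^*)\ge s\rho_{\min}\sum_{j\le n}a_j(x^*)^{-1}$ tends to $+\infty$ with $n$, because $\langle X^E,X\rangle\ge\rho_{\min}>0$ on $\la$ (definition of $E$-translation) and the sum diverges by Step 1. The quadratic remainders contribute $\mathcal O(|s|^{2\beta})$ \emph{per step}; after dividing by $a_j(x^*)$ and summing, their total is at most $\mathcal O(|s|^{2\beta-1})$ times the signal, \emph{uniformly in $n$} — and $\mathcal O(|s|^{2\beta-1})\to0$ as $s\to0$ precisely because $2\beta>1$, i.e.\ $\al>1/2$. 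This is the only place the hypothesis $\al>1/2$ is used. Since also $|u_0|\le C'|s|^{\al}$ and, along $n=n_k$, $|u_{n_k}/a_{n_k}(x^*)|\le C'|s|^{\al}/c_1$ are bounded, once the term $\langle X^E(x^*_{j+1}),Df_{x^*_j}w_j\rangle$ is controlled the identity for $u_0$ forces a fixed finite number to diverge, a contradiction; hence $\la$ is $E$-hyperbolic.

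The main obstacle is exactly that ``cross term'': the $E$-component that $Df$ feeds in from the complementary directions $w_j$ is a priori of size $|s|^{\al}$ per step, i.e.\ larger than the signal, and it appears because $E$ need not have a $Df$-invariant complement in $TM|_\la$. One handles it by replacing the orthogonal complement, along the orbit of $x^*$, by one adapted to the cocycle. In the clean case where $E$ is an Oseledets subspace of $\mu_0$ whose (zero) exponent has multiplicity one, the sum of the other Oseledets subspaces is a measurable $Df$-invariant complement, so the cross term vanishes $\mu_0$-a.e.; passing to the induced map on a positive-measure set where the Oseledets angle stays bounded below (and applying Atkinson there) supplies times $n_k$ at which simultaneously $a_{n_k}(x^*)$ is bounded above and below and the projection $u_j$ is controlled. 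The remaining configurations — a zero exponent of higher multiplicity, or non-$E$-hyperbolicity arising only from exponents of mixed signs with no ergodic zero-exponent measure — call for an ad hoc construction of $x^*$ by iterated use of transitivity together with an estimate for the accumulated cross term; this bookkeeping, juggling the three competing scales $|s|^{\al}$ (a priori bound), $s$ (signal) and $|s|^{2\beta}$ (remainder), is the technical heart of the proof. In the partially hyperbolic applications the complement $E^s\oplus E^u$ is genuinely $Df$-invariant, the cross term is identically zero, and one is left with the clean mechanism above.
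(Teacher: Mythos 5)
Your overall mechanism is the right one and is essentially the paper's: play off the three scales $|s|^{\alpha}$ (the a priori bound on the displacement of the conjugacy), $s$ (the translation signal, which accumulates through $\sum_j a_j^{-1}$ without cancellation), and $|s|^{2\alpha}$ (the Taylor remainder), and use $2\alpha>1$ to make the remainder subordinate to the signal. But the proposal has a genuine gap, and it is exactly the one you flag yourself: the cross term $\langle X^E(x^*_{j+1}),Df_{x^*_j}w_j\rangle$. That term is of size $O(|s|^{\alpha})$ per step with no definite sign, so after dividing by $a_j$ and summing it is $O(|s|^{\alpha-1})$ times the signal; this blows up as $s\to 0$ and can cancel the signal outright, so nothing in your contradiction survives until it is controlled. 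Your proposed fix --- replace the orthogonal complement by an Oseledets complement --- is only sketched, applies only when there is an ergodic zero-exponent measure with a one-dimensional Oseledets block, and even there faces unresolved obstructions: the complement is merely measurable and defined $\mu_0$-a.e., its angle with $E$ degenerates, the displacement $v_j$ produced by the conjugacy has no reason to respect that splitting, and inducing on a good-angle set turns the one-step recursion into a variable-length block recursion whose remainders must all be re-estimated. The ``remaining configurations'' (no ergodic zero-exponent measure, so that your point $x^*$ with $c_1\le a_{n_k}(x^*)\le c_2$ must be built by hand from transitivity) are likewise left undone. Since you yourself describe this bookkeeping as ``the technical heart of the proof,'' what you have is a plan, not a proof.

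For contrast, the paper never selects a special orbit. It isolates a uniform property of the scalar cocycle $a_i$ (``bounded solution'': every perturbation $\{w_i\}$ with $\|\{w_i\}\|\le 1$ on every finite window admits a solution of $v_{i+1}=a_iv_i+w_{i+1}$ of norm at most $Q$), proves the extremal Main Lemma that the constant perturbation $\{1\}_n$ is the worst one and that any perturbation bounded below by $d>0$ is at least $d$ times as bad, and then feeds the conjugacy into this: the recursion $\hat u_{i+1}=a_i\hat u_i+\hat w_{i+1}+\hat r_{i+1}$ with $\hat w_{i+1}\ge ds$ and $|\hat r_{i+1}|\le C_2s^{2\alpha}$ yields $Q(n)\le Cs^{\alpha-1}/(d-C_2s^{2\alpha-1})$ uniformly in $n$ and in the base point, and a short dichotomy lemma converts bounded solution into $E$-hyperbolicity. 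This organization makes your second gap (existence of $x^*$) moot. As for the first: the paper projects by $\hat u_i=\langle X^E(x_i),u_i\rangle$ and asserts $\langle X^E(x_{i+1}),Df_{x_i}u_i\rangle=a_i\hat u_i$; your cross term is precisely the discrepancy in that identity when $Df$ does not carry $(X^E)^{\perp}$ to $(X^E)^{\perp}$, so you have put your finger on a real delicacy of the argument --- but identifying the obstacle is not the same as removing it, and your proposal does not remove it.
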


Coming back to the stability problem, it is  said that a compact invariant  set $\la$ of $f$ is {\em $\alpha$-H\"older-stable by $C^r$ perturbations} if there exist $\U(f)$ a neighborhood of $f$ in the $C^r$-topology such that for every $g\in \U(f)$ we have an homeomorphism $\varphi:\la\to M$ which verifies $\varphi \circ f_{|\la} = g \circ \varphi$ and $d(\varphi,i)\leq Cd(f,g)^\alpha$ for certain $C>0$ where $i:\la\to M$ is the inclusion. Since looking to a one-parameter family of perturbations is weaker than looking to a neighborhood of the system we conclude:

\begin{coro} \label{coroPrin2}
Let $f$ be a $C^r$ diffeomorphism with $r\geq 2$ and $\la$ a compact invariant and transitive set with a one dimensional $Df$-invariant sub-bundle $E$. If $\la$ is $E^c$-orientable, $f$ preserves the $E^c$-orientation of $\la$ and $\la$ is  $\alpha$-H\"older-stable by $C^r$ perturbations with  $\alpha> 1/2$ then $\la$ is hyperbolic.
\end{coro}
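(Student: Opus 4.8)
The plan is to show that $\alpha$-H\"older-stability by $C^r$ perturbations forces $\la$ to be $\alpha$-stable by \emph{some} $C^r$ $E^c$-translation, with the \emph{same} exponent $\alpha>1/2$, then apply Theorem~\ref{teoPrin} to obtain $E^c$-hyperbolicity, and finally upgrade this pointwise contraction (resp. expansion) of $E^c$ to uniform hyperbolicity of $\la$ using the ambient dominated splitting. To build the translation: since $\la$ is $E^c$-orientable there is a continuous unit vector field $X^{E^c}$ along $E^c$ over $\la$; extend it to a continuous vector field on a neighbourhood of $\la$ in $M$ and approximate it in the $C^0$ norm by a $C^r$ vector field $X$ close enough that $\langle X^{E^c}(x),X(x)\rangle>0$ for every $x\in\la$. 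Let $\psi_s$ be the flow of $X$ and $f_s=\psi_s\circ f$; this is a perturbation by a $C^r$ $E^c$-translation in the sense of the definitions above.

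Next I would control $f_s$ in the $C^r$ topology. As $\psi_s$ is the time-$s$ map of a $C^r$ flow, $d_{C^r}(\psi_s,\mathrm{Id})\le K_0|s|$ for $|s|$ small, and composing on the right with the fixed $C^r$ diffeomorphism $f$ is locally Lipschitz for $d_{C^r}$, so $d_{C^r}(f_s,f)=d_{C^r}(\psi_s\circ f,\mathrm{Id}\circ f)\le K|s|$. Hence for $|s|$ small enough $f_s$ lies in the neighbourhood $\U(f)$ provided by $\alpha$-H\"older-stability, and there is $\varphi_s:\la\to M$ with $\varphi_s\circ f_{|\la}=f_s\circ\varphi_s$ and
\[ d(i,\varphi_s)\ \le\ C\,d(f,f_s)^{\alpha}\ \le\ C\,(K|s|)^{\alpha}\ =\ C'|s|^{\alpha}. \]
Thus $\la$ is $\alpha$-stable by $C^r$ $E^c$-translations with exponent $\alpha>1/2$, and Theorem~\ref{teoPrin} applies: $\la$ is $E^c$-hyperbolic, i.e. $\|Df^n_{|E^c(x)}\|\to 0$ for every $x\in\la$, or $\|Df^{-n}_{|E^c(x)}\|\to 0$ for every $x\in\la$.

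It remains to pass from this to hyperbolicity of $\la$. Since $E^c$ is a one-dimensional continuous subbundle over the compact set $\la$, the map $x\mapsto\log\|Df_{|E^c(x)}\|$ is continuous and $\log\|Df^n_{|E^c(x)}\|=\sum_{j=0}^{n-1}\log\|Df_{|E^c(f^j(x))}\|$; by a standard argument (empirical measures, together with Birkhoff's and Atkinson's theorems, ruling out a zero or positive integral against any invariant measure) the pointwise condition $\|Df^n_{|E^c(x)}\|\to 0$ for all $x$ is in fact uniform, so there are $C>0$ and $\lambda\in(0,1)$ with $\|Df^n_{|E^c(x)}\|\le C\lambda^n$ for all $x\in\la$, $n\ge0$ (symmetrically for $Df^{-1}$ in the other case). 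In the partially hyperbolic context in which the corollary is read, $T_\la M=E^s\oplus E^c\oplus E^u$ is a dominated splitting with $E^s$ uniformly contracted and $E^u$ uniformly expanded; domination together with the uniform contraction just obtained makes $E^s\oplus E^c$ uniformly contracted, so the splitting $T_\la M=(E^s\oplus E^c)\oplus E^u$ exhibits $\la$ as hyperbolic, and if instead $E^c$ is uniformly expanded one argues symmetrically with $E^c\oplus E^u$. (If only the bare hypotheses of Theorem~\ref{teoPrin} are assumed, the argument stops at $E^c$-hyperbolicity of $\la$.)

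\textbf{Main obstacle.} Granted Theorem~\ref{teoPrin}, the corollary is essentially soft; the delicate point is the linear-in-$s$ estimate $d_{C^r}(f_s,f)\le K|s|$, because it is exactly what lets a H\"older bound in $d(f,g)$ become a H\"older bound in $|s|$ with the \emph{same} exponent $\alpha$: if the dependence $s\mapsto f_s$ were only, say, $\tfrac12$-H\"older in the $C^r$ norm, the exponent would degrade below $1/2$ and Theorem~\ref{teoPrin} could no longer be invoked. The pointwise-to-uniform upgrade of $E^c$-hyperbolicity is routine but must be quoted explicitly.
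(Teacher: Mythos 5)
Your proof is correct and follows the route the paper intends: the paper's entire justification for this corollary is the one-line remark that a one-parameter family of perturbations is a special case of a whole neighborhood of perturbations, and you have supplied exactly the missing details, namely the construction of a $C^r$ vector field $X$ with $\langle X^{E^c},X\rangle>0$ on $\la$ and the linear estimate $d_{C^r}(f_s,f)\le K|s|$, which is indeed the one point that must be checked so that the H\"older exponent $\alpha$ is not degraded before invoking Theorem~\ref{teoPrin}. You are also right to note that the bare hypotheses only yield $E$-hyperbolicity in the sense of Theorem~\ref{teoPrin}; the word ``hyperbolic'' in the statement is a slip of the paper (compare Corollary~\ref{coroPrin2ph}, where partial hyperbolicity is assumed precisely so that the central contraction or expansion can be combined with the dominated splitting to give genuine hyperbolicity, as in the last part of your argument).
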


We revisit the theorem for the context of partially hyperbolic sets. Recall that an invariant set $\la$ of a diffeomorphisms $f$ is called partially hyperbolic if there is a tangent bundle decomposition $T_\La=E^s\oplus E^c\oplus E^u$ over $\La$ such that vectors in $E^s (E^u)$ are forward (backward) contracted by $Df$ and for any $x$ follows that $||D_xf_{/E^s}||< ||D_{f(x)}f^{-1}_{/E^c}||^{-1}$ and $||D_xf_{/E^c}||< ||D_{f(x)}f^{-1}_{/E^u}||^{-1}.$  From this result in the partially hyperbolic sets context we can conclude the following:

\begin{coro} \label{coroPrin}
Let $f$ be a $C^r$ diffeomorphism with $r\geq 2$ and $\la$ a partially hyperbolic set with  $dim(E^c)=1$. If $\la$ is $E^c$-orientable with orientation preserved by  $f$ and $\la$ is $\alpha$-stable by $C^r$ $E^c$-translations with $\alpha> 1/2$ then $\la$ is hyperbolic.
\end{coro}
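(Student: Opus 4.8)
The plan is to deduce Corollary~\ref{coroPrin} directly from Theorem~\ref{teoPrin}. The logical structure is: partial hyperbolicity gives a decomposition $T_\la = E^s\oplus E^c \oplus E^u$ with $\dim E^c = 1$; the central bundle $E^c$ plays the role of the $Df$-invariant one-dimensional subbundle $E$ in the theorem. All the hypotheses of Theorem~\ref{teoPrin} are verbatim those of the corollary: $\la$ is compact, invariant, transitive (transitivity must be part of the standing assumptions, or else is absorbed into the meaning of ``partially hyperbolic set'' in the way it is used here), $E^c$-orientable with orientation preserved by $f$, and $\alpha$-stable by $C^r$ $E^c$-translations with $\alpha > 1/2$. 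So Theorem~\ref{teoPrin} applies and yields that $\la$ is $E^c$-hyperbolic, i.e.\ either $\|Df^n_{|E^c(x)}\|\to 0$ for all $x$, or $\|Df^{-n}_{|E^c(x)}\|\to 0$ for all $x$.

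The remaining step is to upgrade ``$E^c$-hyperbolic'' to ``hyperbolic'' in the usual sense, using that the ambient set is partially hyperbolic. Suppose first we are in the case $\|Df^n_{|E^c(x)}\|\to 0$ for all $x\in\la$. Then the direction $E^c$ is uniformly contracted: a standard compactness/subadditivity argument (the function $x\mapsto \log\|Df^n_{|E^c(x)}\|$ is subadditive along orbits, continuous in $x$, and its time-$n$ averages go to $-\infty$ pointwise on a compact set) shows there are constants $C>0$, $\lambda\in(0,1)$ with $\|Df^n_{|E^c(x)}\|\le C\lambda^n$ for all $n\ge 0$ and all $x$. Combined with the uniform contraction of $E^s$ already built into partial hyperbolicity, the bundle $E^s\oplus E^c$ is a uniformly contracting invariant bundle, while $E^u$ is uniformly expanding; hence $T_\la = (E^s\oplus E^c)\oplus E^u$ is a hyperbolic splitting and $\la$ is hyperbolic. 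In the other case, $\|Df^{-n}_{|E^c(x)}\|\to 0$ for all $x$, the symmetric argument shows $E^c$ is uniformly expanded, so $E^s$ is the stable bundle and $E^c\oplus E^u$ the unstable bundle, and again $\la$ is hyperbolic.

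One point worth a line of care is the passage from pointwise decay of $\|Df^n_{|E^c(x)}\|$ to uniform exponential decay; this is where compactness of $\la$ and continuity of $x\mapsto E^c(x)$ enter, and it is the only non-formal ingredient. It can be phrased as: since for each $x$ some iterate makes the norm $< 1/2$, and this is an open condition locally, a finite subcover gives a single $N$ with $\|Df^N_{|E^c(x)}\|<1/2$ for all $x$; iterating along the orbit in blocks of length $N$ and using a uniform bound on $\|Df\|$ to control the remainder gives the claimed $C\lambda^n$ estimate. Everything else is bookkeeping: one simply records that a diffeomorphism whose tangent bundle over an invariant set splits into an invariant uniformly contracting part and an invariant uniformly expanding part is, by definition, hyperbolic there. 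I do not expect a genuine obstacle in this corollary; the entire difficulty of the paper is concentrated in Theorem~\ref{teoPrin}, and here we are merely translating its conclusion into the partially hyperbolic vocabulary and absorbing the central direction into whichever of the stable or unstable bundle it joins.
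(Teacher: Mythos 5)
Your proposal is correct and follows exactly the route the paper intends: the paper gives no separate proof of this corollary, treating it as an immediate consequence of Theorem~\ref{teoPrin} applied with $E=E^c$, plus the standard upgrade from pointwise decay of $\|Df^n_{|E^c}\|$ on a compact invariant set to uniform exponential contraction (or expansion), after which $E^c$ is absorbed into $E^s$ (resp.\ $E^u$) to produce the hyperbolic splitting. Your remark that transitivity must be read into the corollary's hypotheses is apt, since the final step of the paper's proof of Theorem~\ref{teoPrin} (ruling out the coexistence of $\la^s$ and $\la^u$) does use it.
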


In the partially hyperbolic context,  one could consider $E^c-$translation from a flow such that its trajectories move  along the central leaves. Since the foliation is not necessarily $C^r$, that perturbation is not $C^r$. Nonetheless, with the definitions we gave  it is not necessary to consider such type of restricted perturbations.

Coming back to the stability problem and as in corollary \ref{coroPrin2} we conclude:

\begin{coro} \label{coroPrin2ph}
Let $f$ be a $C^r$ diffeomorphism with $r\geq 2$ and $\la$ a partially hyperbolic set with $dim(E^c)=1$. If $\la$ is $E^c$-orientable, $f$ preserves the $E^c$-orientation of $\la$ and $\la$ is $\alpha$-H\"older-stable by $C^r$ perturbations with $\alpha> 1/2$ then $\la$ is hyperbolic.
\end{coro}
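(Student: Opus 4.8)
The plan is to deduce this statement directly from Corollary \ref{coroPrin}, in exactly the way Corollary \ref{coroPrin2} is deduced from Theorem \ref{teoPrin}. The key observation is that $\alpha$-H\"older-stability by $C^r$ perturbations is a \emph{stronger} hypothesis than $\alpha$-stability by $C^r$ $E^c$-translations: a perturbation by a $C^r$ $E^c$-translation is, by definition, a one-parameter family $f_s=\psi_s\circ f$ with $\psi_s$ a $C^r$ flow, so $f_s$ converges to $f$ in the $C^r$ topology as $s\to 0$ and therefore eventually enters any prescribed $C^r$-neighborhood of $f$.

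First I would fix any $C^r$ $E^c$-translation $\psi_s$; such a flow exists because $\la$ is $E^c$-orientable, so one can produce (by a partition-of-unity argument on a neighborhood of $\la$) a $C^r$ vector field $X$ with $\langle X^{E^c},X\rangle>0$ on $\la$ and take its flow. For $|s|$ small enough, $f_s=\psi_s\circ f$ lies in the neighborhood $\U(f)$ supplied by the H\"older-stability hypothesis, so there is a homeomorphism $\varphi_s:\la\to M$ with $\varphi_s\circ f_{|\la}=f_s\circ\varphi_s$ and $d(\varphi_s,i)\le C\,d(f,f_s)^\alpha$. Since $(s,x)\mapsto\psi_s(x)$ is $C^r$ and $\la$ (or a compact neighborhood of it) is compact, one has $d(f,f_s)\le K|s|$ for some constant $K>0$ and all small $s$; hence $d(\varphi_s,i)\le CK^\alpha |s|^\alpha$. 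This is precisely the assertion that $\la$ is $\alpha$-stable by $C^r$ $E^c$-translations, and Corollary \ref{coroPrin} then yields that $\la$ is hyperbolic.

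I do not expect a genuine obstacle in this corollary: the only points requiring care are (i) the existence of a $C^r$ $E^c$-translation under the orientability hypothesis, which is the routine construction just indicated, and (ii) the estimate $d(f,\psi_s\circ f)=O(|s|)$ in the $C^r$ metric, which follows from smoothness of the flow in the parameter on a compact set. All of the substantive content — in particular the passage from the pointwise $E^c$-hyperbolicity produced by Theorem \ref{teoPrin} to an honest uniformly hyperbolic splitting $E^s\oplus E^c$ (respectively $E^c\oplus E^u$) of $\la$ — is already absorbed into Corollary \ref{coroPrin}, so nothing new is needed here.
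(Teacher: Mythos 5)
Your reduction is exactly the one the paper intends: it states that $\alpha$-H\"older-stability by all $C^r$ perturbations is stronger than $\alpha$-stability by the one-parameter family of $E^c$-translations (since $d(f,\psi_s\circ f)=O(|s|)$), and then invokes Corollary \ref{coroPrin}. Your write-up just makes explicit the two routine points (existence of a $C^r$ $E^c$-translation and the Lipschitz-in-$s$ estimate) that the paper leaves implicit, so this is correct and essentially the same proof.
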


Previous corollary can be extended to a more general context: partially hyperbolic sets with many one dimensional center directions:

\begin{coro} \label{coroPrin3}
Let $f$ be a $C^r$ diffeomorphism with $r\geq 2$ and $\la$ a partially hyperbolic set such that $E^c= E^c_1\oplus \dots E^c_k$ where $dim(E^c_i)=1$. If for all $i$, $\la$ is $E_i^c$-orientable, $f$ preserves the $E^c_i$-orientation of $\la$ and  is $\alpha$-stable by $C^r$ $E^c_i$-translations with $\alpha> 1/2$ then $\la$ is hyperbolic.
\end{coro}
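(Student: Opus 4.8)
The plan is to apply the main Theorem~\ref{teoPrin} separately to each one–dimensional $Df$-invariant subbundle $E^c_i$, and then to reassemble the $k$ pieces of information into a genuine hyperbolic splitting of $T\la$. For each fixed $i\in\{1,\dots,k\}$, the subbundle $E^c_i$ is one–dimensional and $Df$-invariant over the compact invariant (transitive) set $\la$, the set $\la$ is $E^c_i$-orientable with orientation preserved by $f$, and by hypothesis $\la$ is $\alpha$-stable by $C^r$ $E^c_i$-translations with $\alpha>1/2$. These are exactly the hypotheses of Theorem~\ref{teoPrin} applied with $E=E^c_i$, so I would invoke it to conclude that $\la$ is $E^c_i$-hyperbolic; that is, either $\|Df^n_{|E^c_i(x)}\|\to 0$ for every $x\in\la$, or $\|Df^{-n}_{|E^c_i(x)}\|\to 0$ for every $x\in\la$.

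Next I would upgrade each of these pointwise statements to a uniform one. Since $E^c_i$ is one–dimensional, $x\mapsto\log\|Df_{|E^c_i(x)}\|$ is continuous on the compact set $\la$, and pointwise convergence to $0$ of the associated multiplicative cocycle (resp.\ of its inverse) forces, by the standard argument with limits of empirical measures on a compact invariant set, uniform exponential contraction of $E^c_i$ under $Df$ (resp.\ under $Df^{-1}$). The two alternatives are mutually exclusive, since on a one–dimensional bundle $\|Df^{-n}_{|E^c_i(f^n x)}\|=\|Df^n_{|E^c_i(x)}\|^{-1}$. Hence one obtains a partition $\{1,\dots,k\}=I^-\sqcup I^+$, with $E^c_i$ uniformly contracted by $Df$ for $i\in I^-$ and uniformly expanded for $i\in I^+$.

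Finally, set $E^{ss}:=E^s\oplus\bigoplus_{i\in I^-}E^c_i$ and $E^{uu}:=\bigoplus_{i\in I^+}E^c_i\oplus E^u$. Each summand is a continuous $Df$-invariant subbundle, so $E^{ss}$ and $E^{uu}$ are continuous and $Df$-invariant, and $T\la=E^{ss}\oplus E^{uu}$. Because the angles between the finitely many continuous summands are uniformly bounded away from $0$, any vector in $E^{ss}$ splits into components of norm comparable to its own, and the triangle inequality together with the uniform contraction of each summand (recall $E^s$ is uniformly contracted by partial hyperbolicity) yields uniform exponential contraction of $E^{ss}$; symmetrically $E^{uu}$ is uniformly expanded. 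Therefore $T\la=E^{ss}\oplus E^{uu}$ is a hyperbolic splitting, so $\la$ is hyperbolic.

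I expect the only genuinely delicate points to be the two classical facts used above: the passage from the pointwise $E^c_i$-hyperbolicity delivered by Theorem~\ref{teoPrin} to uniform hyperbolicity of a one–dimensional continuous invariant subbundle over a compact invariant set, and the fact that a finite direct sum of uniformly contracted (resp.\ expanded) continuous invariant subbundles is again uniformly contracted (resp.\ expanded). Both hold on compact invariant sets — the first via invariant measures, the second via the uniform lower bound on the angles between the summands — so beyond this bookkeeping no serious obstacle is anticipated; the substance of the corollary is entirely carried by the main theorem.
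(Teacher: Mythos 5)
Your proposal is correct and follows exactly the route the paper intends (the corollary is stated without an explicit proof): apply Theorem~\ref{teoPrin} to each one-dimensional invariant bundle $E^c_i$ separately and then assemble the contracted ones with $E^s$ and the expanded ones with $E^u$, using continuity of the splitting over the compact set $\la$ to control angles. The only remark worth adding is that the pointwise-to-uniform upgrade you invoke via empirical measures can be read off directly from the proof of Theorem~\ref{teoPrin} in Section~2, where the dichotomy $\la=\la^s$ or $\la=\la^u$ already yields uniform exponential contraction or expansion of the products $\prod a_i$ along every orbit, so no additional ergodic-theoretic lemma is actually needed.
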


Those type of hyperbolic sets appears naturally for diffeomorphisms $C^1$ far from tangencies: in \cite{CSY11} S. Crovisier, M. Sambarino and D. Yang proved that far from tangencies in $Diff^1(M)$ there exist a filtration $\phi=U_0\subset  U_1\subset \dots\subset U_k=M$ such the maximal invariant set in each $U_i\backslash U_{i-1}$ is a partially hyperbolic set where $E^c$ can be splitted in one-dimensional invariant sub-bundles.

The following examples shows that previous theorem are sharp: Let  $f:\R \to \R$ be defined by $f(t)= t - t^2$, and let us restrict the dynamic to a neighborhood of $0$ where it is injective; observe that  $0$ is a fixed point with $f'(0) = 1$ and therefore is not hyperbolic. If we perturb $f$ taking $f_s= f+s$ observe that  the continuation $0(s)$ of $0$ as a fixed point   verifies $0(s)= 0(s) - 0(s)^2 + s$ obtaining $0(s) = \pm s^{1/2}$ when $s\geq 0$. In this example there is no continuation for $s<0$ and there are two continuations for $s>0$. However, the hypothesis needed for the proof of  theorem \ref{teoPrin} are weaker: it is only needed   the existence of a continuation (which might not be only one, recall item 5 in the list of definitions) for $s\geq 0$. 

In particular, this example suggest that the same problems can be approached now requiring and extra hypothesis:  the existence of a unique continuation for on a open set of parameters around $0$. In that case, observe that  the map $f:\R\to \R$ defined by $f(t)= t - t^3$ verifies that it has $0$  as a non hyperbolic fixed point and taking $f_s= f+s$ observe that  the continuation $0(s)$ of $0$ has  a unique continuation that verifying that  $0(s)=s^{1/3}$. This open the question if the extra hypothesis could allow us to improve the result for $\alpha > 1/3$. Even though this example is not Kupka-Smale, one which actually is can be done in dimension $3$ using the ideas of the example in \cite{Pu06}. This example is essentially a skew-product which has the previous map over a minimal set of a two dimensional horseshoe. All the previous examples have the property that even though they are not hyperbolic, the perturbation by the central translation force them to be hyperbolic. We wonder if this case, Even if it is not, we can formulate a weaker question: Given a partially hyperbolic set $\la$ stable by central translation, are the parameters in which the continuation of $\la$ is hyperbolic an open and dense set? 

A particular class of diffeomorphisms having a one dimensional invariant sub-bundle  are the skew-product. Let $h:M\to M$ be a $C^r$ diffeomorphism, and $g:M\times R\to \R$ be a $C^r$ map such that the maps $g_x:R\to \R$, defined by $g_x(t)=g(x,t)$, are $C^r$ diffeomorphisms $\forall x\in M$, we define the skew-product family as the maps $H:M\times \R\to M\times \R$ with $H(x,t)=(h(x),g(x,t))$. In particular we define for them  the sub-bundle $E$ given by  $\{0\}\times T_t\R\subset T_{(x,t)}M\times \R$.  All previous results can be applied for that class where the natural translation family is given by $H_s(x, t)=(h(x),g(x,t)+s).$

For that particular class with center dynamics  given by a family of one dimensional diffeomorphisms, we wonder if for certain class of dynamics   previous theorem can be extended for $\alpha\leq 1/2$ or even for the case that it is  only assume continuous variation of the conjugacy respect  the parameter perturbation (in such a case we say that $H$ is {\em continuously stable}).

Given  a $C^2$ skew-product $H=(h,g)$ it is defined the maps $g',g'':M\times \R \to \R$ where $g'(x,t)= \frac{\partial g}{\partial t}(x,t)$ and $g''(x,t)=\frac{\partial^2 g}{\partial^2 t}(x,t)$. It is clear that if $H$ preservers the $E$-orientation on $\la$ a compact invariant set then $g'_{|\la}>0$. 

We aim now to study the case of continuous stability by the central translation. Since the proof for H\"older stability only holds with $\alpha>1/2$, we aimed to work with extra hypothesis on the system instead of the stability of the set.

\begin{teoprin} \label{segpos}Let $H(x,t)$ be a $C^2$ Kupka-Smale skew-product  and $\la$ a compact invariant and transitive set such that $H$ preserves the $E$-orientation and $g''_{|\la}\geq 0$ on a neighborhood of $\la$. If $\la$ is stable by $C^r$ $E$-translation and the periodic points are dense then it is $E$-hyperbolic.
\end{teoprin}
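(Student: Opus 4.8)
The plan is to argue by contradiction: assume $\la$ is stable by a $C^r$ $E$-translation with continuous variation of the conjugacy but $\la$ is not $E$-hyperbolic. Since $H$ preserves the $E$-orientation, the center derivative is described by the cocycle $a(x) = g'(x,t(x))$ along the center, and $E$-hyperbolicity fails means that neither $\prod_{k=0}^{n-1} a$ nor its inverse tends to $0$ uniformly. Because the periodic points are dense and $H$ is Kupka-Smale, I would first reduce the non-hyperbolic behavior to periodic data: by density of periodic orbits together with the sub-additive (Fekete) structure of $\log\|Df^n_{|E}\|$, if $\la$ were neither uniformly center-contracting nor uniformly center-expanding one can locate periodic points whose center multipliers are $\le 1$ and periodic points whose center multipliers are $\ge 1$; by Kupka-Smale each periodic point is hyperbolic, so in fact we get a periodic point $p$ with center multiplier $\lambda^c(p) < 1$ and a periodic point $q$ with $\lambda^c(q) > 1$ (if all periodic center multipliers were on one side, uniform hyperbolicity on $\la$ would follow from the standard Mañé-type argument, contradicting the assumption). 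Transitivity then lets me connect a neighborhood of $p$ to a neighborhood of $q$.

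The heart of the matter is the one-dimensional center dynamics. Restricting the translation family $H_s(x,t)=(h(x),g(x,t)+s)$ and using stability, for each small $s\ge 0$ there is a conjugacy $\varphi_s$ with $\varphi_s\circ H = H_s\circ\varphi_s$ and $\varphi_s\to i$ as $s\to 0$. Now I would look at the fiber over a periodic orbit: over the periodic point $q$ with $\lambda^c(q)>1$, the center map is (after passing to the return map) a one-dimensional diffeomorphism $G$ of an interval with a repelling fixed point $t_q$; the translated system has return map $G + (\text{period})\cdot s + o(s)$ roughly, and here the hypothesis $g''_{|\la}\ge 0$ is what forces rigidity: a $C^2$ interval map with non-negative second derivative near a fixed point with multiplier $>1$ is, locally, "convex", so that when one pushes it up by $s$ the fixed point either disappears on one side or persists but its position moves at a rate controlled from below by a constant (not by $\sqrt s$). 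More precisely, convexity near a repeller prevents the quadratic tangency phenomenon $0(s)=\pm\sqrt s$ of the example $t-t^2$: the obstruction to stability must instead come from the contracting side. I would therefore transfer attention to $p$ with $\lambda^c(p)<1$: its center return map has an attracting fixed point, and non-negativity of $g''$ makes the graph lie above its tangent line at the fixed point, so after translating upward by $s>0$ the attracting fixed point is destroyed (the graph of $G_s-\mathrm{id}$ stays strictly positive) unless it collides with another fixed point. Tracking how far the orbit of a point in the fiber must travel before it can shadow again, and comparing with the displacement $\|\varphi_s - i\|\le C|s|^\alpha$ — here only continuity, i.e. $\varphi_s\to i$, is assumed — yields a contradiction with the existence of the conjugacy, because the conjugacy would have to move points a definite amount bounded away from $0$ while $s\to 0$.

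Concretely, the key steps in order are: (1) use Kupka-Smale + density of periodic points + failure of $E$-hyperbolicity to produce periodic points on both sides ($\lambda^c<1$ and $\lambda^c>1$), invoking a Mañé/Liao-type closing argument to rule out the one-sided case; (2) localize near the attracting-center periodic point $p$ and write the center return map $G_s$ of the translated cocycle, using $g''\ge 0$ to get the convexity estimate $G_s(t)-t \ge G_0(t)-t + c\cdot s$ for a constant $c>0$ on a fixed neighborhood; (3) conclude that for $s>0$ the point $p$ has no nearby center fixed point for $H_s$, so any conjugacy $\varphi_s$ must send $p$ to a point whose center coordinate is a definite distance from $t_p$; combine with transitivity to show the orbit segment realizing transitivity from a neighborhood of $p$ to a neighborhood of $q$ gets stretched, so $\varphi_s$ cannot be $C^0$-close to $i$ for all small $s>0$ — contradicting stability with continuous dependence. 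The main obstacle is step (2)–(3): making the convexity hypothesis $g''\ge 0$ interact correctly with the \emph{return} map (whose second derivative is not simply a sum of the $g''$'s because of the chain rule through the base), and ensuring that the displacement one extracts is genuinely uniform over the relevant piece of $\la$ rather than degenerating along the orbit; the Kupka-Smale hypothesis and the convexity are precisely what should prevent the $\sqrt{s}$-type cancellation that would otherwise allow a conjugacy to exist. An alternative, possibly cleaner route for step (2) is to use the negative-Schwarzian analogue mentioned in the abstract, since negative Schwarzian derivative also forbids the relevant degeneracies; but with $g''\ge 0$ the convexity argument is the most direct.
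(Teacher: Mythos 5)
There is a genuine gap, and it sits exactly where the content of the theorem lives: your step (1). You assert that if all periodic center multipliers lie on one side then uniform $E$-hyperbolicity of $\la$ follows ``from the standard Ma\~n\'e-type argument'', and you therefore reduce to the case where $\la$ carries both a center-attracting and a center-repelling periodic point. Neither half of this is right. First, for skew-products (one-dimensional cocycles over a transitive base) one-sidedness of all periodic multipliers does \emph{not} imply uniform hyperbolicity of the closure: one can have every periodic product $\prod g'$ less than $1$ while some non-periodic orbit has $\prod_{i=0}^{n-1}g'(H^i(z_0))$ bounded away from zero. Ma\~n\'e's interval-map theorem does not transfer to the fibered return maps here, and if such a reduction were available the theorem would follow from orientation preservation and stability alone, with no role for $g''\geq 0$. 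Second, the configuration with periodic points of both types never occurs under the hypotheses: the $\nu$-lemma (lemma \ref{nulema}, the conjugacy moves every point coherently to one side by at least $s$) together with the formula for the speed $p'(0)$ of the continuation of a periodic point forces all periodic points to be attracting or all repelling (proposition \ref{prop A}). So your steps (2)--(3), which extract a contradiction from the coexistence of a center-attractor and a center-repeller joined by transitivity, address a case that is already excluded, while the genuinely hard one-sided case is waved away.

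The mechanism your proposal is missing is quantitative. For a contracting periodic point the speed of its continuation is $p'(0)=\sum_{i\geq 0}\prod_{j=1}^{i-1}g'(H^{-j}(p))$; if $\la$ is not $E$-hyperbolic there is $z_0$ with $\prod_{i=0}^{n-1}g'(H^i(z_0))>\delta$ for all $n$, and density of periodic points produces $p_n$ with $p_n'(0)\geq K_n\delta/2\to\infty$. The hypothesis $g''\geq 0$ enters only to guarantee that $g'$ along the continuation is non-decreasing in $s$, so these speeds stay large for $s>0$; then the continuations $p_n(s)$ cannot converge pointwise to $z_1(s)$ for an accumulation point $z_1$ of the $p_n$, contradicting continuity of $\varphi_s$. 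Your convexity observation in step (2) is in the right spirit (convexity is indeed preserved under composition, since $g'>0$ and $g''\geq 0$), but it is applied to the wrong configuration and does not by itself yield the uniform lower bound on displacement that your step (3) requires.
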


 A simple hypothesis that provides good distortion estimates and  allows that the second derivative can change from positive to negative in different fibers,  is to assume that the one dimensional dynamics fibers has negative Schwartzian derivative, i.e. for any $g_x$ it is supposed that $Sg_x(t)= \frac{g_x^{'''}(t)}{g_x^{''}(t)}-\frac{3}{2}\left(\frac{g_x^{''}(t)}{g_x^{'}(t)}\right)^2<0$ for any $t$ in a neighborhood of $\la$. A simple result assuming that hypothesis is the following:

\begin{teoprin}\label{sch neg} Let $H(x,t)$ be a  $C^3$ skew-product such that for any $x$ the diffeomorphism $g_x$ verifies that $Sg_x<0.$ Let $\la$ be a transitive set and if $H_{|\la}$ is continuously stable then $\la$ is $E$-hyperbolic.
\end{teoprin}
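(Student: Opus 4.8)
The plan is to argue by contraposition. Assume $\la$ is not $E$-hyperbolic; I will show it is not continuously stable by producing, for arbitrarily small $s$ of one fixed sign, a point of $\la$ that any conjugacy $\varphi_s$ must move a definite amount, contradicting $\varphi_s\to i$. I use the normal form $H_s(x,t)=(h(x),g(x,t)+s)$; a general $C^r$ $E$-translation only replaces the constant source term $s$ by $s\langle X^E,X\rangle(x,t)+O(s^2)$, a positive multiple of $s$ on $\la$, together with a harmless bounded motion of the base coordinate that is absorbed into the error terms below; one also reduces, as usual by passing to a power of $H$, to the case where the center is orientation preserving on $\la$, so that $a(x,t):=\partial_t g(x,t)>0$ there. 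For $p=(x_0,t_0)$ let $F_n:=g_{h^{n-1}x_0}\circ\cdots\circ g_{x_0}$, so $F_n'(t_0)=\prod_{k=0}^{n-1}a(H^kp)=\|Df^n|_{E(p)}\|$. Because negative Schwarzian derivative is preserved under composition, every $F_n$ has $SF_n<0$ on a fixed neighborhood of the fibers over $\la$; this furnishes, with $n$-independent constants, the minimum principle for $|F_n'|$ and the Koebe distortion estimates — the negative-Schwarzian substitute for the convexity estimates used in Theorem~\ref{segpos}.

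Given a conjugacy $\varphi_s$, let $\rho_s(q)$ be the increment of the $\R$-coordinate of $\varphi_s(q)$ over that of $q$; $\rho_s$ is continuous, hence bounded on the compact set $\la$, with $B(s):=\sup_\la|\rho_s|\to 0$. Reading off the fiber component of $\varphi_s\circ H=H_s\circ\varphi_s$ along an orbit gives the one-dimensional recursion $\rho^{(n+1)}=g_{h^nx_0}(t_n+\rho^{(n)})-t_{n+1}+s+(\text{bounded base error})$, where $\rho^{(n)}=\rho_s(H^np)$ and $t_n$ is the fiber coordinate of $H^np$; the base error is genuinely small since $H$ and $H_s$ share the base map $h$. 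Unfolding this and using the uniform Koebe bounds to keep the distortion of the $F_n$ under control — so that over the (long) time scales that matter the nonlinear recursion stays comparable to its linearization — one obtains, with $\psi_k:=\log F_k'(t_0)$,
\[\rho^{(n)}=F_n'(t_0)\,\rho_s(p)+s\sum_{k=1}^{n}e^{\psi_n-\psi_k}+\varepsilon_n(s),\qquad \varepsilon_n(s)\to0\ \text{uniformly in } n\ \text{as } B(s)\to0.\]
Hence $|\rho^{(n)}|\le B(s)$ forces, for every $n$,
\[s\sum_{k=1}^{n}e^{\psi_n-\psi_k}\ \le\ \big(e^{\psi_n}+2\big)B(s).\]

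It remains to choose $p\in\la$ so that the left side blows up. Since $\la$ is transitive and $E$ is neither uniformly contracted nor uniformly expanded over $\la$, there are invariant measures with nonnegative and with nonpositive center Lyapunov exponent; as the invariant measures on $\la$ form a connected family, a selection argument (of Pliss type, using transitivity to bridge these regions) yields a recurrent $p=(x_0,t_0)\in\la$ whose center cocycle neither contracts away to $0$ nor is forced to expand — concretely, $e^{\psi_{n_i}}$ stays bounded along some $n_i\to\infty$ while $\sum_k e^{-\psi_k}=+\infty$. For such a $p$ the right side of the last inequality stays bounded along $n_i$, whereas the left side is at least $s\sum_{k=1}^{n_i}e^{-\psi_k}\to+\infty$; fixing any small $s>0$ and taking $n_i$ large enough gives the desired contradiction, so $\la$ is $E$-hyperbolic.

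The step I expect to be the main obstacle is exactly this choice of $p$: distilling, from ``$\la$ transitive and not uniformly hyperbolic'', a single orbit along which the one-dimensional center cocycle is simultaneously non-contracting along a subsequence and not summably expanded — ruling out in particular that all of the non-hyperbolicity is carried by orbits whose cocycle runs off to $+\infty$, which would be a disguised uniform expansion. This is where transitivity does real work, connecting the zone of nonnegative exponent with the zone of nonpositive exponent, in tandem with the negative-Schwarzian distortion control, which is what turns the resulting a priori wildly oscillating, merely subexponential cocycle into the clean linearized estimate above and justifies replacing the true recursion by its linear model. The remaining points — the orientation-reversing case, where the telescoped sum could cancel, and the possibility that the base coordinate of $\varphi_s$ actually moves — are handled respectively by passing to a power of $H$ and by absorbing the base displacement, being bounded, into the $\varepsilon_n(s)$.
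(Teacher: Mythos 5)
Your strategy is genuinely different from the paper's (a contrapositive, cocycle--linearization argument in the spirit of Theorem 1, rather than the paper's interval argument), but as written it has two gaps, and both are located exactly where the paper's proof does its real work. The first is the selection of the point $p$. Your final inequality, after dividing by $e^{\psi_n}$, reads $s\sum_{k\le n}e^{-\psi_k}\le (1+2e^{-\psi_n})B(s)$, and to contradict it you need a single point whose Birkhoff sums $\psi_n$ tend neither to $+\infty$ nor to $-\infty$ (a subsequence bounded below to keep the right side bounded, and a subsequence bounded above to make $\sum_k e^{-\psi_k}$ diverge). Failure of $E$-hyperbolicity only hands you one point whose forward products do not tend to $0$ and another whose \emph{backward} products do not tend to $0$; the forward witness may perfectly well have $\psi_n\to+\infty$ sublinearly, in which case $\sum_k e^{-\psi_k}$ converges and your inequality is vacuous whenever $B(s)\gg s$ (continuity of $\varphi_s$ gives no better). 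You flag this as the main obstacle and offer only ``a selection argument of Pliss type''; that is the statement that needs a proof, and it is not a routine consequence of transitivity. The paper sidesteps it entirely: the $\nu$-lemma (Lemma \ref{nulema}) uses transitivity and orientation preservation to show that \emph{every} point is displaced by at least $s$ in a coherent direction, so the contradiction can be run along the dense orbit itself with no exceptional-point selection.

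The second gap is the linearized formula with $\varepsilon_n(s)\to 0$ uniformly in $n$. The exact recursion is $\rho^{(n+1)}=g'(\xi_n)\rho^{(n)}+s+(\text{base error})$, with $\xi_n$ a mean-value point of $[t_n,t_n+\rho^{(n)}]$. Positivity of $g'$ does give you, with no error term at all, $s\sum_{k}\prod_{j\ge k}g'(\xi_j)\le B(s)\bigl(1+\prod_{j}g'(\xi_j)\bigr)$; but replacing $\prod_{j=k}^{n-1}g'(\xi_j)$ by $e^{\psi_n-\psi_k}$ is \emph{not} a Koebe estimate: the $\xi_j$ are chosen independently in the intervals $J_j=[t_j,t_j+\rho^{(j)}]$ and do not lie on the orbit of a single point, so the product is not $(g^{n-k})'$ evaluated anywhere, and the crude bound $\sum_j|\log g'(\xi_j)-\log g'(t_j)|\le CnB(s)$ is useless. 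To compare the two products one needs $g(J_j)\subset J_{j+1}$, i.e.\ a coherent sign for all $\rho^{(j)}$ --- which is again the content of the $\nu$-lemma and of Lemma \ref{bounded interval}; the per-step base error has uncontrolled sign and is only $O(B(s))$, not $o(s)$, so it cannot simply be absorbed. Finally, note that your argument uses the negative Schwarzian only as a distortion tool, whereas the paper's proof uses it essentially: Lemma \ref{main l 2} derives a contradiction from $Sg^n\to-\infty$ on an interval of length bounded below, and that is precisely what excludes the scenario (invisible to a pure distortion argument) in which the continuation intervals neither contract nor leave a $\delta$-neighbourhood.
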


The negative Schwartzian derivative hypothesis is commonly used in one-dimensional dynamics. Singer in \cite{Si78} proved that for one dimensional maps  with negative Schwartzian derivative if all the critical points belong to the basin of some hyperbolic attractor then the system is hyperbolic. Also, in the context of skew-products, Solano in \cite{So12} proved that non-uniform hyperbolicity along the leaves implies existence of a finite number of ergodic absolutely continuous invariant probability measures which describe the asymptotic of almost every point. 

Regarding the orientability concepts, it is clear that for the skew-product family any compact invariant set $\la$ will be $E$-orientable. Also if $\la$ is a compact connected  $f$-invariant set and $E$ is a one dimensional sub-bundle for which $\la$ is $E$-orientable then $f$ will always preserve or reverse the orientation of $E$. 

We address in section 2 with the H\"older Stability result and in section 3 with the continuously stable results.

\textbf{Acknowledgment:} Both authors were supported by CNPq, Brazil.

%%%%%%%%%%%%%%%%%%%%%%%%%%%%%%%%%%%%%%%%%%%%%%%%%%%%%%%%%%%%%%%%%%%PROOF OF THE THEOREM%%%%%%%%%%%%%%%%%%%%%%%%%%%%%%%%%%%%%%%%%%%%%%%%%%

\section{H\"older Stability - Proof of Theorem 1}

To prove this theorem we use the notion of bounded solution of a family of linear isomorphisms and we adapt the techniques used by S. Tikhomirov in \cite{Ti11}. In that article it is considered systems having H\"older shadowing property for finite intervals. Let us remark that orbit of a perturbation is always pseudo-orbit of the original system yet not all pseudo-orbits are associated to a $C^1$ perturbation and much less to a $C^r$ perturbation. In relation with what we said in the introduction, hyperbolic systems have a slow movement in the continuation of the points. The algebraic environment we are going to use allows us to control such movement and to prove the converse: slow movement implies hyperbolicity. Differently to what is done in  \cite{Ti11} where the problem is about  shadowing property and  algebraic perturbations can be projected on the ambient manifold  to become become  pseudo-orbits, we need to consider algebraic perturbations that induces translation in the ambient manifold (recall that we only consider perturbation obtained by translations). To deal with that, we prove that the worst perturbation of this algebraic environment is the uniform algebraic translation and we also prove that the nearby perturbations of the uniform translation are  comparable to it. We prove then that the $E$-translations are associated to algebraic translations nearby the uniform translation and therefore we conclude hyperbolicity in the sub-bundle $E$.

The general framework is the following: Let $\{E_m\}_{m\in \Z}$ be a family of euclidean spaces of dimension $k$ and $\A=\{A_m:E_m\to E_{m+1}\}_{m\in \Z}$ a sequence of linear isomorphism such there exist $R>0$ with $\left\|A_m\right\|< R$ and $\left\|A_m^{-1}\right\|< R$ for all $m\in \Z$. We say that $\A$ has bounded solution if there exist $Q>0$ such that for every $m\in \Z$ and $n\in \N$ if we take $w_{m+1},\dots,w_{m+n}$ with $w_i\in E_i$ and $\left|w_i\right|\leq 1$ there exist $v_{m},\dots,v_{m+n}$ with $v_i\in E_i$ which verifies:
\[ v_{i+1}= A_i v_i + w_{i+1}\quad \forall i=m,\dots, m+n-1,\]
and 
\[|v_i| \leq Q \quad \forall i=m,\dots, m+n.\]

What the previous definition says is that if we perturb the linear isomorphisms by translations in finite intervals we can find a continuation of the $0$ orbit at bounded distance. 
What is proven by D. Todorov in $\cite{To13}$ is that bounded solution implies hyperbolicity for $\A$ which means that $E_m = E^s_m\oplus E^u_m$, $A_m(E^{s,u}_m)= E^{s,u}_{m+1}$ and there is exponential contraction on $E^s$ and exponential expansion on $E^u$. Since in our context $E_m$ has dimension 1, proving hyperbolicity from bounded solution is simple and so we do it at the end of the article.   

Given $x\in \la$ we define $\A(x)=\{Df_{|E}:E(x_m)\to E(x_{m+1})\}$ where $x_m=f^m(x)$. We say that $\la$ has uniform bounded solution if there exist $Q$ such that $\A(x)$ has bounded solution and $Q$ is a bound for all $ x\in \la $. 

\begin{prop}\label{propPrin}
Let $f$ be a $C^r$ diffeomorphism with $r\geq 2$ and $\la$ a compact invariant set with $E$ a one dimensional $Df$-invariant sub-bundle. If $\la$ is $E$-orientable, $f$ preserves or reverses the $E$-orientation of $\la$ and $\la$ is $\alpha$-stable by $C^r$ $E$-translations with $\alpha> 1/2$ then $\la$ has uniform bounded solution.
\end{prop}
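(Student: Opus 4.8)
The plan is to transport the topological information carried by the conjugacies $\varphi_s$ into the purely algebraic statement about $\A(x)$, following the scheme announced above: the $E$-translation should induce on each $\A(x)$ an algebraic perturbation comparable to the uniform translation, and the conjugacy itself should supply a bounded solution for it.

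To begin, fix the $C^r$ $E$-translation $f_s=\psi_s\circ f$ and the maps $\varphi_s:\la\to M$ given by the $\alpha$-stability hypothesis, together with $C>0$ and $\e>0$ with $d(i,\varphi_s)\le C|s|^\alpha$ for $|s|<\e$. Write $Df_x(X^E(x))=a(x)X^E(f(x))$ for the continuous function $a:\la\to\R$, which by compactness of $\la$ and the bounds on $Df^{\pm1}$ satisfies $R^{-1}\le|a(x)|\le R$. In the bases $\{X^E(x_m)\}_m$ the family $\A(x)$ is then multiplication by the scalars $a(x_m)$, and the goal is a constant $Q$ — depending only on $C$, $\e$, $R$ and on the chosen $E$-translation — bounding some solution of the scalar recursion $v_{i+1}=a(x_i)v_i+w_{i+1}$, $|w_i|\le 1$, uniformly over $x\in\la$ and over finite windows.

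Since the recursion is linear there are two reductions at our disposal: subtracting the solutions of two forcings shows it is enough to treat forcings $c_i$ confined to a fixed interval $[c_-,c_+]$ with $0<c_-\le c_+$; and, by the algebraic lemmas that follow (the worst algebraic perturbation is the uniform translation, and nearby translations are comparable to it), it is in fact enough to exhibit, for each $x$, a uniformly bounded solution for the single forcing induced by the $E$-translation. To identify that forcing, fix once and for all a small $s_0\in(0,\e)$, and along an orbit $x_i=f^i(x)$ put $\Delta_i:=\exp_{x_i}^{-1}\varphi_{s_0}(x_i)$, so $|\Delta_i|\le C's_0^\alpha$ uniformly in $i$ and $x$. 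The identity $\varphi_{s_0}\circ f=\psi_{s_0}\circ f\circ\varphi_{s_0}$ together with a second order Taylor expansion of $f$ and of $\psi_{s_0}$ — this is where the hypothesis $r\ge 2$ enters — gives, uniformly in $i$,
\[\Delta_{i+1}=Df_{x_i}\Delta_i+s_0X(x_{i+1})+O(s_0^{2\alpha}).\]
Writing $v_i$ for the coordinate of $\Delta_i$ along $E(x_i)$ in a system of coordinates along the orbit adapted to $E$, this becomes $v_{i+1}=a(x_i)v_i+s_0b_{i+1}+\rho_i$, where $b_{i+1}$ is the corresponding coordinate of $X(x_{i+1})$ — confined to a fixed interval $[b_-,b_+]$ with $b_->0$, since $\langle X^E,X\rangle>0$ on the compact set $\la$ — and $\rho_i$ gathers the Taylor remainder and the contribution of the part of $\Delta_i$ transverse to $E$.

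The crux — and the step I expect to be the main obstacle — is to choose the adapted coordinates along each orbit so that the transverse part feeds into the $E$-coordinate only to second order, i.e.\ so that $\rho_i=O(s_0^{2\alpha})$; this comes down to controlling the interaction of $E$ with a sufficiently invariant complement of non-degenerate angle, and it is the real technical content of the proof. Granting it, divide by $s_0$: the sequence $\hat v_i:=v_i/s_0$ satisfies $\hat v_{i+1}=a(x_i)\hat v_i+b_{i+1}+O(s_0^{2\alpha-1})$ with $|\hat v_i|\le C's_0^{\alpha-1}$. Here $\alpha>1/2$ is decisive: the exponent $2\alpha-1$ being positive, for $s_0$ small enough (depending only on $b_-$ and the implied constants) the forcing $b_{i+1}+O(s_0^{2\alpha-1})$ still lies in $[b_-/2,\,2b_+]$, so $(\hat v_i)$ is an honest bounded solution of $\A(x)$ — with the $x$-independent bound $C's_0^{\alpha-1}$ — for a forcing in a fixed interval of positive numbers. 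Feeding this into the reductions above yields the desired constant $Q$ for all $|w_i|\le 1$; that is, $\la$ has uniform bounded solution.
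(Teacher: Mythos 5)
Your scheme is the paper's: convert the conjugacy $\varphi_{s_0}$ into a bounded solution of the scalar recursion for one specific positive forcing, then use extremality properties of the uniform algebraic translation to cover all forcings of norm at most $1$. The dynamical half is essentially right: the recursion $\hat v_{i+1}=a(x_i)\hat v_i+b_{i+1}+O(s_0^{2\alpha-1})$ with $|\hat v_i|\le C's_0^{\alpha-1}$, and the role of $2\alpha-1>0$ in keeping the perturbed forcing bounded away from $0$, match the paper's computation (the paper absorbs the quadratic remainder differently, via its property (P2) and a self-referential inequality in $Q(n)$, but your version of pushing the $O(s_0^{2\alpha-1})$ into the forcing is equivalent).

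The genuine gap is that the two ``algebraic lemmas'' you invoke --- that the uniform translation $\{1\}_n$ is the worst perturbation, i.e.\ $Q(n)=P(\{1\}_n)$, and that a forcing with $w_i\ge d>0$ satisfies $P(\{w_i\}_n)\ge d\,Q(n)$ --- are asserted and never proved, and they are the technical heart of the proposition. Without them, a uniformly bounded solution for the one forcing produced by the translation says nothing about arbitrary forcings of norm $\le 1$: your subtraction trick only reduces to forcings in a fixed positive interval, and you still must compare those to the single dynamical forcing, which is exactly what the two lemmas do. Their proof is not a one-liner: one writes $g_i(v)=B_iv+C_i$, uses convexity of $\max_i|g_i|$ to reduce the computation of $P$ to pairwise minima of absolute values of affine maps, evaluates these explicitly as $|C_{k,l}|B_k/(B_k+B_{k,l})$, and compares with the uniform translation via $D_{k,l}\ge |C_{k,l}|$ --- this is precisely where positivity of the $a_i$ (orientation preservation) enters, and none of it appears in your write-up. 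Secondarily, the step you yourself flag as the crux --- arranging that the transverse part of $\Delta_i$ contributes only $O(s_0^{2\alpha})$ to the $E$-coordinate --- is left as ``granting it''; you are right to worry, since the orthogonal projection gives a cross term $\langle X^E(x_{i+1}),Df_{x_i}(\Delta_i^{\perp})\rangle$ that is a priori only $O(s_0^{\alpha})$, and the paper offers no help here (it simply writes $\langle X^E(x_{i+1}),Df_{x_i}(u_i(s))\rangle=a_i\hat u_i(s)$, which presumes the cross term vanishes). So the argument, as submitted, is incomplete at its algebraic core and openly conditional at the projection step.
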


\begin{proof}
The uniformity will come along the proof. It is just needed to see that the constants do not depend on $x$. We will fix $x$ and prove that $\A(x)=\A$ has bounded solution.

Observe first that given $v\in E(x_m)$, we have that $v= <v,X^E(x_m)> X^E(x_m)$ and that $Df(v)= a(x_m)<v,X^E(x_m)>X^E(x_{m+1})$ where $a(x_m)$ comes from the definition of $f$ preserving or reverting the $E$-orientation. To simplify the notation we will identify $E(x_m)=E_m$ with $\R$ and $A_n$ with the linear map $v\mapsto a(x_m)v= a_m v$.

Given $m\in\Z$ and $n\in \N$, take $w_{m+1},\dots,w_{m+n}\in \R$ $(\{w_i\}_{m,n})$ and also $v_m,\dots, v_{m+n}\in \R$ $(\{v_i\}_{m,n})$ such that
\[v_{i+1}= a_i v_i + w_{i+1}\quad \forall i=m,\dots, m+n-1. \] 

We define the norm $\left\| \{v_i\}_{m,n}\right\|= max\{|v_i|:i=m,\dots, m+n\}$.

To prove the proposition we need to find a number $Q>0$ which for all $m\in \Z$, $n\in \N$ and all $ \{w_i\}_{m,n}$ with $\left\|\{w_i\}_{m,n}\right\|\leq 1$ we can find $\{v_i\}_{m,n}$ such that $\left\|\{v_i\}_{m,n}\right\|\leq Q$. 

It will be clear in the proof that the starting point of the sequences $\{w_i\}_{m,n}$ and $\{v_i\}_{m,n}$ will not be relevant in the computations, therefore we assume $m=0$ and from now on we note $\{w_i\}_n$ and $\{v_i\}_n$.

In order to find $Q$, given $\{w_i\}_n$ we define 
\[O(\{w_i\}_n)=\{\{v_i\}_n: v_{i+1}= a_i v_i + w_{i+1}\ \forall i=m,\dots, m+n-1\},\]
as the space of finite orbits for the perturbation $\{w_i\}_n$. Since we want to find one $\{v_i\}_n\in O(\{w_i\}_n)$ with a small norm we will take the one with the smallest. We define then
\begin{equation}
P(\{w_i\}_n)=min\{\left\|\{v_i\}_n\right\|: \{v_i\}_n\in O(\{w_i\}_n)\}.
\label{defP}
\end{equation}

Since $\left\|\cdot\right\|$ is a norm the previous definition is good. Now we take the worst perturbation and define
\[ Q(n)=max\{P(\{w_i\}_n):\left\|\{w_i\}_n\right\|\leq 1\}.\]

The previous definition is good because $P$ is continuous according to $\{w_i\}_n$ and the space of $\{w_i\}_n$ with $\left\|\{w_i\}_n\right\|\leq 1$ is compact.

The first property (P1) is the following, given $d\in \R$ and $\{w_i\}_n$ we have that $P(\{dw_i\}_n) = |d|P(\{w_i\}_n)$. This is because $\{v_i\}_n\in O(\{w_i\}_n)\Longleftrightarrow \{dv_i\}_n\in O(\{dw_i\}_n)$ and $\left\|\{dv_i\}_n\right\|=|d|\left\|\{v_i\}_n\right\|$.

From this we conclude the property (P2): Given $\{w_i\}_n$ with $\left\|\{w_i\}_n\right\| > 0$ there exist $\{v_i\}_n\in O(\{w_i\}_n)$ such that 
\[ \left\|\{v_i\}_n\right\|\leq Q(n) \left\|\{w_i\}_n\right\|. \]

To see this take $d= \left\|\{w_i\}_n\right\|$ and then $\left\|\{d^{-1}w_i\}_n\right\|\leq 1$. By definition of $Q(n)$ we have that 
$P (\{d^{-1}w_i\}_n)\leq Q(n)$ and therefore there exist $\{\hat{v}_i\}_n\in O(\{d^{-1}w_i\}_n)$ such that $\left\|\{\hat{v}_i\}_n\right\|\leq Q(n)$. If $\{v_i \}_n=  \{d\hat{v}_i\}_n$ then $\{v_i \}_n\in O(\{w_i\}_n)$ and from the property (P1) we conclude that $\left\|\{v_i\}_n\right\|\leq Q(n) \left\|\{w_i\}_n\right\| $.

We now prove the main lemma: {\em  in the present  algebraic context, the algebraic uniform translation ($\{1\}_n$) is the worst perturbation}. Later we show that  the algebraic uniform translation  can be associated to $C^r$ perturbation.  Using that we are dealing with one-dimensional perturbations and $Q(n)$ measures how far we can find the continuation of an orbit, it follows that the most far continuation is obtained translating to the same side:  if not, the continuation should be closer.

\begin{lema}(Main-Lemma)
If $a_i>0$ $\forall i$ then $Q(n)= P(\{1\}_n)$ where $P$ is given by (\ref{defP}).
\end{lema}

\begin{proof}
Given $\{w_i\}_n$ with $\left\|\{w_i\}_n\right\|\leq 1$ if $\{v_i\}_n \in O(\{w_i\}_n)$ we have that 
\[ v_i =  \prod_{k=0}^{i-1}a_k v_0+ \sum_{k=1}^{i}\prod_{j=k}^{i-1}a_j w_k. \]

If $B_i = \prod_{k=0}^{i-1}a_k$ and $C_i = \sum_{k=1}^{i}\prod_{j=k}^{i-1}a_j w_k$ we define $g_i:\R\to \R$ and $G_n:\R\to \R$ such that
\[g_0(v)=v\]
\[g_i(v)= B_i v+ C_i\quad \forall i=1,\dots,n,\]
and 
\[G_n(v)=max\{|g_i(v)|:0\leq i\leq n\}.\]

If $D_i = \sum_{k=1}^{i}\prod_{j=k}^{i-1}a_j$ in an analogous way we define $f_i:\R\to \R$ and $F_n:\R\to \R$ such that
\[f_0(v)=v,\]
\[f_i(v)= B_i v+ D_i\quad \forall i=1,\dots,n,\]
and 
\[F_n(v)=max\{|f_i(v)|:0\leq i\leq n\}.\]

Therefore we have that $P(\{w_i\}_n)= min\{G_n(v):v\in \R\}$ and $P(\{ 1\}_n)= min\{F_n(v):v\in \R\}$.

Given $i_1,i_2\leq n$ we define the maps $(g_{i_1},g_{i_2}):\R \to \R$ and $(f_{i_1},f_{i_2}):\R \to \R$ by 
\[(g_{i_1},g_{i_2})(v)= max\{|g_{i_1}(v)|,|g_{i_2}(v)|\},\]
and 
\[(f_{i_1},f_{i_2})(v)= max\{|f_{i_1}(v)|,|f_{i_2}(v)|\}.\]

We also define the values $min(g_{i_1},g_{i_2})$ and $min(f_{i_1},f_{i_2})$ as the minimum value taken by the maps $(g_{i_1},g_{i_2})$ and $(f_{i_1},f_{i_2})$ respectively. It is easy to verify that the infimum value is in fact a minimum.

Since $|g_i|$ and $|f_i|$ are convex functions $G_n$ and $F_n$ are also convex functions. This implies the following assertion:
\[min(G_n)= max\{ min(g_{i_1},g_{i_2}):i_1, i_2 \leq n\},\]
and 
\[min(F_n)= max\{ min(f_{i_1},f_{i_2}):i_1, i_2 \leq n\}.\]

The previous assertion tell us that to compare $P(\{w_i\}_n)$ and $P(\{1\}_n)$ we just need to compare $min(g_{i_1},g_{i_2})$ and $min(f_{i_1},f_{i_2})$.

We are going to prove now that for $i_1$ and $i_2$ fixed we have that 
\[ min(g_{i_1},g_{i_2})\leq min(f_{i_1},f_{i_2}).\]

This and the previous assertion implies $P(\{w_i\}_n)\leq P(\{1\}_n)$ which concludes the lemma.

For the maps $g_i$ and $f_i$ we have the following property: Given $k,l\in \N$ such that $k+l\leq n$ there exist $B_{k,l},C_{k,l}$ and $D_{k,l}$ such that:
\[g_{k+l}(v)= B_{k,l}g_k(v) + C_{k,l},\]
and
\[f_{k+l}(v)= B_{k,l}g_k(v) + D_{k,l}.\]

Let us observe now that $D_i$ is always positive. In particular it verifies $D_i \geq |C_i|$ and moreover $D_{k,l}\geq |C_{k,l}|$.
The previous statements are easy computations concluded from the fact that $a_i>0$.

Fix now $i_1$ and $i_2$. Suppose that $i_1<i_2$ and take $k=i_1$ and $l=i_2-i_1$. We have then:
\[g_{i_1}(v)= B_kv + C_k\quad g_{i_2}(v)= B_{k,l}B_kv + B_{k,l}C_k + C_{k,l},\]
and
\[f_{i_1}(v)= B_kv + D_k\quad f_{i_2}(v)= B_{k,l}B_kv + B_{k,l}D_k + D_{k,l}.\]

If $min(g_{i_1},g_{i_2})= (g_{i_1},g_{i_2})(v_0)$ then $v_0$ verifies $|g_{i_1}(v_0)| =|g_{i_2}(v_0)|$. Moreover if 
$\hat v_1$ and $\hat v_2$ are such that $g_{i_j}(\hat v_j)=0$ then $v_0\in [min\{\hat v_1,\hat v_2\},max\{\hat v_1,\hat v_2\}]$.
Suppose that $\hat v_1<\hat v_2$ then $v_0$ verifies the equation:
\[ g_{i_1}(v_0)=-g_{i_2}(v_0).\]

If we resolve this we conclude that
\[v_0= \frac{-C_k -C_{k,l} - B_{k,l}C_k}{B_k+B_{k,l}},\]
and therefore
\[min(g_{i_1},g_{i_2})= \frac{-C_{k,l}B_k}{B_k+B_{k,l}}.\]

Since $B_k$ and $B_{k,l}$ are positive $C_{k,l}$ must be negative. This comes from the condition $\hat v_1<\hat v_2$. In any case we have that
\[min(g_{i_1},g_{i_2})= \frac{|C_{k,l}|B_k}{B_k+B_{k,l}}.\]

Computing for $f_{i_1}$ and $f_{i_2}$ we conclude
\[min(f_{i_1},f_{i_2})= \frac{D_{k,l}B_k}{B_k+B_{k,l}}.\]

Since $D_{k,l}\geq |C_{k,l}|$ we have that $min(f_{i_1},f_{i_2})\geq min(g_{i_1},g_{i_2})$ finishing the proof of the lemma
\end{proof}

We now prove that we can compare algebraically the dynamics of the perturbation $\{1\}_n$ with a close one.

\begin{lema} \label{lemaPrin}
If $\{w_i\}_n$ verifies $w_i \geq d$ for some $d>0$ then $d Q(n)\leq P(\{w_i\}_n)$.
\end{lema}

\begin{proof}
Define $\hat w_i = w_i d^{-1}$. Then $\hat w_i\geq 1$ $\forall i=1,\dots,n$. Take as in the previous lemma $g_i$, $G_n$, $B_k$, $C_k$, $C_{k,l}$ and $(g_{i_1},g_{i_2})$ associated to $\{\hat w_i\}_n$ and $f_i$, $F_n$, $D_k$, $D_{k,l}$ and $(f_{i_1},f_{i_2})$ associated again to $\{1\}_n$. In this case we have that $C_k\geq D_k>0$ and moreover $C_{k,l}>D_{k,l}>0$. This implies that $min(g_{i_1},g_{i_2})\geq min(f_{i_1},f_{i_2})$ which implies that $min(G_n)\geq min(F_n)$. Since $min(G_n)= P(\{\hat w_i\}_n)= d^{-1}P(\{ w_i\}_n)$ and $min(F_n)= P(\{1\}_n)=Q(n)$ because of the previous lemma we conclude the proof of the lemma.
\end{proof}

Let us now link this algebraic environment with the dynamics. Recall that $f_s = \psi_s \circ f$, and we have the conjugation $\varphi_s$. We define $x_i(s)=\varphi_s(x_i)$ $\forall i\in \Z$. From now on we will work with $s>0$. By hypothesis we have that $d(x_i,x_i(s))\leq Cs^\alpha$.

Let us suppose that $s$ is small enough such that $Cs^\alpha\leq \delta$ where $\delta$ is a uniform radius for which the map $exp_x:T_xM(\delta)\to B(x,\delta)$ is a  diffeomorphism. 

We define now $u_i(s)= exp^{-1}_{x_i}(x_i(s))$ and we consider its projection to $E(x_i)=E_i$ which we identified with $\R$ by $\hat u_i(s)=<X^E(x_i),u_i(s)>$.

For the following computation we are going to need the next definition: given $i$ and $s$ we define the vector $\hat X_i(s)=D(exp_{x_i}^{-1})_{x_i(s)}(X(x_i(s)))\in T_{x_i}M$.

We have then that
\[\hat u_{i+1}(s)=<X^E(x_{i+1}),u_{i+1}(s)>=<X^E(x_{i+1}), exp^{-1}_{x_{i+1}}(x_{i+1}(s))>\]
\[= <X^E(x_{i+1}), exp^{-1}_{x_{i+1}}(f_s(x_{i}(s)))>=<X^E(x_{i+1}), exp^{-1}_{x_{i+1}}(\psi_s(f(x_{i}(s))))>. \]
Using the Taylor polynomial on $\psi_s$ we estimate $exp^{-1}_{x_{i+1}}(\psi_s(f(x_{i}(s))))$ by $exp^{-1}_{x_{i+1}}(f(x_{i}(s))) + s\hat X_{i+1}(s)$. In particular if $b_{i+1}(s)=  exp^{-1}_{x_{i+1}}(\psi_s(f(x_{i}(s)))) - exp^{-1}_{x_{i+1}}(f(x_{i}(s))) - s\hat X_{i+1}(s)$ then $|b_{i+1}(s)|/s\stackrel{s\to 0}{\longrightarrow} 0$ uniformly on $i$. We proceed with our computation:
\[\hat u_{i+1}(s)=<X^E(x_{i+1}),b_{i+1}(s)+ exp^{-1}_{x_{i+1}}(f(x_{i}(s))) + s\hat X_{i+1}(s)>\]
\[=<X^E(x_{i+1}),b_{i+1}(s)+ exp^{-1}_{x_{i+1}}(f(exp_{x_i}(u_i(s)))) + s\hat X_{i+1}(s)>.\]

Since $f$ is at least $C^2$ we have that $| exp^{-1}_{x_{i+1}}(f(exp_{x_i}(u_i(s)))) - Df_{x_i}(u_i(s))|\leq C_1 |u_i(s)|^2$.
Let $r_{i+1}(s)=exp^{-1}_{x_{i+1}}(f(exp_{x_i}(u_i(s)))) - Df_{x_i}(u_i(s))$ and then we have that
\[\hat u_{i+1}(s)=<X^E(x_{i+1}),Df_{x_i}(u_i(s))+ s\hat X_{i+1}(s)+b_{i+1}(s)+r_{i+1}(s)>.\]
Observe that $<X^E(x_{i+1}),Df_{x_i}(u_i(s))>= a_i \hat u_i(s)$. If we define 
\[\hat w_{i+1}(s) = <X^E(x_{i+1}),s\hat X_{i+1}(s)+b_{i+1}(s)>,\] 
and 
\[\hat r_{i+1}(s)= <X^E(x_{i+1}),r_{i+1}(s)>,\]
then we obtain:
\[ \hat u_{i+1}(s) = a_i \hat u_i(s) + \hat w_{i+1}(s) + \hat r_{i+1}(s).\]

In particular 
\[|\hat r_{i+1}(s)|\leq C_1 |u_i(s)|^2\leq C_1 C^2 s^{2\alpha}.\]

We also have from definition of $X$ and the fact that $|b_{i+1}(s)|/s\stackrel{s\to 0}{\longrightarrow} 0$ uniformly on $i$ that there exist $d>0$ such that
\[\frac{\hat w_{i+1}(s)}{s} > d>0\ \forall s\in (0,\epsilon_0).\]
for certain $\epsilon_0>0$.

If we define $C_2= C_1C^2$ and use the property (P2) from $Q(n)$ to $\{\hat r_i(s)\}_n$ we conclude the existence of $\{e_i(s)\}_n\in O(\{\hat r_i(s)\}_n)$ such that
\[\left\|\{e_i(s)\}_n\right\|\leq Q(n) \left\|\{\hat r_i(s)\}_n\right\|\leq Q(n)C_2 s^{2\alpha}.\]

Is easy to check that $\{\hat u_i(s) - e_i(s)\}_n\in O(\{\hat w_i(s)\})$ and therefore $\{(\hat u_i(s) - e_i(s))/s\}_n\in$ \linebreak
$ O(\{\hat w_i(s)/s\})$.
Since $\hat w_{i+1}(s)/s > d>0$ $\forall i=1,\dots,n$ then $dQ(n)\leq P(\{\hat w_{i+1}(s)/s\})$ as a consequence of lemma \ref {lemaPrin} . But now 
$ P(\{\hat w_{i+1}(s)/s\})\leq \left\|\{(\hat u_i(s) - e_i(s))/s\}_n\right\|$ obtaining that
\[dQ(n)\leq \left\|\left\{\frac{\hat u_i(s) - e_i(s))}{s}\right\}_n\right\|\leq Cs^{\alpha -1} + Q(n)C_2 s^{2\alpha -1},\]
If we take $s$ small enough which we can because $2\alpha -1>0$ then we have that
\[Q(n)\leq \frac{Cs^{\alpha -1}}{d - C_2 s^{2\alpha -1}}.\]
finishing the proof of the proposition \ref{propPrin}.
\end{proof}

Let us prove now the theorem \ref {teoPrin}. 

\begin{proof}
Take $Q$ from the previous proposition which does not depend on $x$. Given $m\in \Z$ and $n\in\N$ we define
\[\lambda(m,n) = \prod_{i=m}^{m+n-1}a_i\]

Let us prove the following lemma:
\begin{lema}
There exist $n_0$ such that for any $m\in \Z$ we have
\[\lambda(m,n_0)> 2 \ or\ \lambda(m+n_0,n_0) < 1/2\]
\end{lema}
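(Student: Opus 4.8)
The plan is to argue by contradiction, along the lines of the ``bounded solution $\Rightarrow$ hyperbolicity'' scheme. If the statement failed, then for every $n\in\N$ there would be an $m$ (depending on $n$) with $\lambda(m,n)\le 2$ and $\lambda(m+n,n)\ge 1/2$; I will feed the uniform translation $\{1\}_{2n}$ into the uniform bounded solution bound of Proposition \ref{propPrin} at base point $m$ and show that $P(\{1\}_{2n})>Q$ once $n$ is large, which is the desired contradiction. Normalize the base point to $0$ and put $\mu_j=\lambda(m,j)$ (so $\mu_0=1$, and the negated conclusion reads $\mu_n\le 2$ and $\mu_{2n}\ge\mu_n/2$), together with $S_j=\sum_{k=1}^{j}\mu_k^{-1}$. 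As in the main lemma, every element of $O(\{1\}_{2n})$ has the form $v_j=\mu_j(v_0+S_j)$, and minimizing the two–term maximum $\max(|v_{j_1}|,|v_{j_2}|)$ over $v_0$ gives, for all $0\le j_1<j_2\le 2n$,
\[
P(\{1\}_{2n})\ \ge\ \frac{S_{j_2}-S_{j_1}}{\mu_{j_1}^{-1}+\mu_{j_2}^{-1}}\ =\ \frac{\sum_{k=j_1+1}^{j_2}\mu_k^{-1}}{\mu_{j_1}^{-1}+\mu_{j_2}^{-1}}.
\]
(We may assume $Q\ge 1$, since bounded solution with constant $Q$ also holds with constant $\max(Q,1)$.)

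The crucial observation is a ``straddling'' inequality: whenever $0\le j_1<n\le j_2\le 2n$, the term $\mu_n^{-1}$ appears in the sum above, so $P(\{1\}_{2n})\le Q$ forces $\mu_n^{-1}\le Q(\mu_{j_1}^{-1}+\mu_{j_2}^{-1})$, equivalently $\min(\mu_{j_1},\mu_{j_2})\le 2Q\mu_n$. Writing $\theta=2Q\mu_n$, this yields the dichotomy: either $\mu_j\le\theta$ for all $j\in[0,n]$, or $\mu_j\le\theta$ for all $j\in[n,2n]$. Indeed, if both failed, pick a violator on each side; since $\mu_n\le 2Q\mu_n=\theta$ automatically, the first violator lies in $[0,n-1]$, so the two indices satisfy $j_1<n\le j_2$ and the straddling inequality is contradicted.

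In the first alternative, $\mu_0=1\le\theta$ gives $\mu_n\ge 1/(2Q)$, and combined with $\mu_n\le 2$ we get $\mu_j\le\theta\le 4Q$ on $[0,n]$, hence $\mu_j^{-1}\ge 1/(4Q)$ there and $S_n\ge n/(4Q)$; taking $(j_1,j_2)=(0,n)$ in the displayed bound and using $\mu_n^{-1}\le 2Q$ gives $P(\{1\}_{2n})\ge n/(12Q^2)$. In the second alternative $\mu_j^{-1}\ge 1/(2Q\mu_n)$ on $[n,2n]$, so $S_{2n}-S_n\ge n/(2Q\mu_n)$; taking $(j_1,j_2)=(n,2n)$ and using $\mu_{2n}\ge\mu_n/2$ in the denominator gives $P(\{1\}_{2n})\ge n/(6Q)$. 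In either case $P(\{1\}_{2n})\ge n/(12Q^2)$, so as soon as $n>12Q^3$ we obtain $P(\{1\}_{2n})>Q$, contradicting Proposition \ref{propPrin}. Hence any integer $n_0>12Q^3$ works.

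The step I expect to need thought — and where a naive attempt gets stuck — is the choice of the index pairs $(j_1,j_2)$: the hypotheses only control the length-$n$ products, not the interior partial products $\mu_j$, and a choice of $(j_1,j_2)$ confined to $\{0,n,2n\}$ is not enough, since $\sum\mu_k^{-1}$ over such a block can stay bounded while $\mu$ makes a large excursion inside one of the two windows. The straddling inequality is exactly what tames this: it pins $\mu_n$ against \emph{every} pair bracketing the index $n$, collapsing the problem to the two-case estimate above, and it is also where the two hypotheses enter — $\lambda(m,n_0)\le 2$ (together with $\mu_0=1$) keeps $\theta$ of size $O(Q)$, and $\lambda(m+n_0,n_0)\ge 1/2$ controls the boundary term $\mu_{2n_0}$.
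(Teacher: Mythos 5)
Your proof is correct, and every step checks out: the formula $v_j=\mu_j(v_0+S_j)$ for solutions of $v_{j+1}=a_jv_j+1$, the two-index lower bound $P(\{1\}_{2n})\ge (S_{j_2}-S_{j_1})/(\mu_{j_1}^{-1}+\mu_{j_2}^{-1})$, the straddling dichotomy, and the case analysis (note that each of the two negated hypotheses is used in exactly one of the two cases, as it should be). The opening move is the same as the paper's — feed the constant translation on the doubled window $[m,m+2n]$ into the uniform bounded-solution constant $Q$ — but the engine is genuinely different. The paper takes an actual bounded solution $\{v_i\}\in O(\{-1\}_{2n})$ with $\|\{v_i\}\|\le Q$, observes that once $v_i\le 0$ all later $v_j$ stay negative, splits on the sign of $v_{m+n-1}$, and solves $a_i=(v_{i+1}+1)/v_i$ to telescope the product, obtaining $\lambda(m,n)\ge Q^{-1}(1+Q^{-1})^n$ in one case and $\lambda(m+n,n)\le(Q+1)(1-Q^{-1})^n$ in the other; this is shorter and yields exponential growth or decay of the products, not merely the threshold $2$ or $1/2$. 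Your argument instead works with the minimal norm $P$ itself, recycling the two-index variational formula already established in the Main Lemma, and its dichotomy is on where the partial products $\mu_j$ can exceed $2Q\mu_n$ rather than on the sign of a solution; what it buys is an explicit, polynomial bound $n_0>12Q^3$ and a lower bound for $P(\{1\}_{2n})$ that is linear in $n$ under the negated hypotheses. Either route completes the proof of the theorem, since the subsequent compactness argument only uses the stated dichotomy.
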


\begin{proof}
Given $m\in \Z$ and $n\in \N$, consider the family $\{-1\}_{m,2n}$. Since $\A$ has bounded solution there exist $\{v_i\}_{m,2n}\in O(\{-1\}_{m,2n})$ such that $\left\|\{v_i\}_{m,2n}\right\| \leq Q$.

Observe that since $a_i >0$ if $v_i\leq 0$ then $v_j<0$ for all $j\geq i$. We have two cases now either: $v_{m+n-1} > 0$ or $v_{m+n-1} \leq 0$. For the first one we have that $v_i>0$ for $i=m,\dots,m+n-1$.

From the equation $v_{i+1}=a_iv_i - 1$ we have that $a_i = \frac{v_{i+1} + 1}{v_i}$. Therefore

\[\lambda(m,n)= \prod_{i=m}^{m+n-1}\frac{v_{i+1} + 1}{v_i}= \frac{v_{m+n}}{v_0}\prod_{i=m}^{m+n-1}\frac{v_i + 1}{v_i}= \frac{v_{m+n}}{v_0}\prod_{i=m}^{m+n-1}\left(1 + \frac{1}{v_i}\right)\]

Using the bound $Q$ over $v_i$ we have that 
\[\lambda(m,n) \geq \frac{1}{Q}\left(1+  \frac{1}{Q}\right)^n.\]

If $v_{m+n-1}\leq 0$ then $v_i <0$ $\forall i=m+n,\dots,m+2n-1$ and then 
\[\lambda(m+n,n)\leq (Q+1)\left(1 - \frac{1}{Q}\right)^n.\]
Taking $n_0$ big enough we conclude the proof of the lemma.
\end{proof}

From the previous lemma is easy to see that if $\lambda(m,n_0)>2$ then $\lambda(m-kn_0,n_0)>2$ for all $k\in \N$ and if $\lambda(m,n_0)<1/2$ then $\lambda(m+kn_0,n_0)<1/2$. Define now $\la^u=\{x\in \la:\lambda(x,m,n_0)>2\}$ and $\la^s=\{x\in \la:\lambda(x,m,n_0)<1/2\}$. Due to continuity and the previous assertion we conclude that this two sets are compact, invariant and disjoint and therefore one must be empty. Having that $\la = \la^s$ or $\la =\la^u$ implies that $\la$ is $E$-hyperbolic.
\end{proof}

\section{Continuous Stability - Proof of Theorems 2 and 3}

The proofs of both theorems share a core understanding of the dynamics of a stable set by $C^r$ $E$-translations. In particular the orientation preserving hypothesis has a key role to establish a relationship between the direction of the continuation of the set and whether the set is expanding or contracting along the $E$ sub-bundle.

A $C^r$ $E$-translation for a skew-product $H = (h,g)$ has the form 
\[H_s(x,t)=(h(x,t,s),g(x,t) + g_1(x,t,s)),\] 
where 
\[\frac{\partial g_1}{\partial s}(x,t,0) > 0\ and\ h(x,t,0)=h(x).\]  
 
Since the action of the perturbation transversally to $E$ is irrelevant to us, we may assume that $h(x,t,s)=h(x)\ \forall s\in (-\epsilon,\epsilon)$. Taking the Taylor polynomial of first degree $g_1$ can be seen as $g_1(x,t,s) = s c_1(x,t) + r(x,t,s)$ where $\delta_1> c_1(x,t) > \delta_0> 0\ \forall (x,t)$ in a neighborhood of $\la$ for some $\delta_1> \delta_0 > 0$. To simplify the computations we may assume that $g_1(x,t,s) = s$ yet the proofs from now on also holds for any $C^r$ $E$-translation.

Take $H=(h,g)$, $\la\subset M\times \R$ a compact invariant and transitive stable set by $C^r$ $E$-translations with $\varphi_s$ the conjugacy. We will use the notation $z(s)= \varphi_s(z)$ and $\nu(z,s)=\pi_\R(z(s))-\pi_\R(z)$.

\begin{lema}($\nu$-lemma)\label{nulema}
Let $H(x,t)$ be a $C^1$ skew-product and $\la$ a compact invariant and transitive set such that $H$ preserves the $E$-orientation. If $\la$ is continuous stable by $C^r$ $E$-translation then either $\nu(z,s)\geq s\ \forall z\in \la,\ s\in (0, \epsilon)\  or\ \nu(z,s)\leq -s\ \forall z\in \la,\ s\in (0, \epsilon)$.
\end{lema}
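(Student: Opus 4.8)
# Proof Plan for the $\nu$-lemma

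The plan is to exploit the conjugacy relation $\varphi_s \circ H = H_s \circ \varphi_s$ on the fiber coordinate, together with the sign condition on the translation, to show that the displacement $\nu(z,s)$ can never be sandwiched strictly between $-s$ and $s$; then a connectedness/transitivity argument forces a uniform sign. First I would write out what the conjugacy says in coordinates. Writing $z = (x,t)$ and $z(s) = \varphi_s(z) = (x', t(s))$ — note that since $h(x,t,s) = h(x)$ we actually get $x' = x$, so $\varphi_s$ moves points only in the fiber, and $\nu(z,s) = t(s) - t$. Applying the conjugacy at $z$ and at $H(z)$ and projecting to $\R$ gives, with the normalization $g_1 \equiv s$,
\[
\pi_\R(H(z)(s)) = g(x, t(s)) + s,
\]
while $\pi_\R(H(z)(s)) = \pi_\R(H(z)) + \nu(H(z),s) = g(x,t) + \nu(H(z),s)$. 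Hence
\[
\nu(H(z),s) = g(x,t(s)) - g(x,t) + s.
\]
Since $H$ preserves the $E$-orientation, $g'_{|\la} = \partial g/\partial t > 0$, so $t \mapsto g(x,t)$ is increasing on (a neighborhood of) the fiber slice of $\la$; consequently $g(x,t(s)) - g(x,t)$ has the same sign as $\nu(z,s) = t(s) - t$.

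The key consequence is a propagation inequality. Suppose for some $z \in \la$ and some $s \in (0,\epsilon)$ we had $\nu(z,s) < s$, i.e. $\nu(z,s) - s < 0$. I want to derive that $\nu$ stays bounded above, which — combined with continuity of $g$ and compactness of $\la$ — will conflict with the $+s$ translation being pushed through infinitely often under iteration. More precisely, set $\eta(z,s) = \nu(z,s) - s$. The displayed identity rewrites as $\eta(H(z),s) = g(x,t+\nu(z,s)) - g(x,t)$, and since $g$ is increasing in $t$, the sign of $\eta(H(z),s)$ equals the sign of $\nu(z,s) = \eta(z,s) + s$. I would then argue: the set $\la^+ = \{z \in \la : \nu(z,s) \geq s \text{ for all small } s > 0\}$ — equivalently where $\eta \geq 0$ — and $\la^- = \{z : \nu(z,s) \leq -s\}$ are, after taking appropriate limits in $s$ (or working with the derivative $\partial_s \nu(z,0)$ where it is controlled, or with $\liminf_{s\to 0} \nu(z,s)/s$), invariant under $H$ by the identity above, closed by continuity of $\varphi_s$ in $z$, and their union is all of $\la$: the alternative at a single point, $-s < \nu(z,s) < s$ for arbitrarily small $s$, is excluded because it would force $|\eta(H(z),s)| < s$ as well, and iterating, $\nu(H^n(z),s)$ would stay in $(-s,s)$ for all $n$ — but then summing the identity $\nu(H^{n+1}(z),s) - \nu(H^n(z),s) = g(H^n(z)\text{-fiber shifted}) - g(\dots) $ telescopes and the persistent $+s$ forces growth, contradicting the uniform bound $|\nu| \le C s^{?}$ coming from continuous stability. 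By transitivity a closed invariant set is either empty or everything, so $\la = \la^+$ or $\la = \la^-$.

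The main obstacle I anticipate is making the limit-in-$s$ argument rigorous: continuous stability gives $\varphi_s \to i$ but not a priori a rate, so the quantity $\nu(z,s)/s$ need not converge, and I must phrase the invariant sets and the dichotomy purely in terms of the inequalities $\nu(z,s) \geq s$ resp. $\le -s$ holding for all $s$ in a one-sided neighborhood of $0$, then show the bad middle case propagates under $H$ and is ruled out by the telescoping/boundedness argument. A secondary subtlety: the identity $\nu(H(z),s) = g(x,t(s)) - g(x,t) + s$ requires $t(s)$ to stay in the neighborhood of $\la$ where $g' > 0$ and where the reduction $g_1 \equiv s$ is legitimate; for this I would shrink $\epsilon$ using the uniform continuity of $\varphi_s$ near $s = 0$ and the compactness of $\la$, exactly as the $\delta_0, \delta_1$ bounds on $c_1$ were set up in the paragraph preceding the lemma. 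Finally, to get the genuinely uniform statement "$\forall z, \forall s \in (0,\epsilon)$" rather than an asymptotic one, I would note that once we know $\la = \la^+$, the identity itself propagates the inequality $\nu \geq s$ forward from any point, and transitivity spreads it to a dense set, hence everywhere by continuity.
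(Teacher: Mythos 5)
Your starting identity $\nu(H(z),s)=g(x,t(s))-g(x,t)+s$ is exactly the computation on which the paper's proof rests, and your endgame (continuity of $\nu$, a dense orbit, transitivity) is also the paper's. The gap is in the step you use to exclude the middle case, and that step is the crux. From $-s<\nu(z,s)<s$ you claim $|\eta(H(z),s)|<s$; but $\eta(H(z),s)=g(x,t(s))-g(x,t)$ and all you know is $|t(s)-t|<s$, so the best available bound is $|\eta(H(z),s)|\le \|g'\|_{\infty}\,s$, which exceeds $s$ as soon as the fibre maps expand. Even granting the bound, $|\eta(H(z),s)|<s$ places $\nu(H(z),s)$ in $(0,2s)$, not in $(-s,s)$, so the orbit does not ``stay in $(-s,s)$'' and your induction does not close. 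Worse, the sub-case $-s<\nu(z,s)<0$ can genuinely persist along a forward orbit: for a repelling fixed point with $g'\equiv 3$ the continuation of the fixed point of $g+s$ sits at $-s/2$, so $\nu\equiv -s/2$ for every iterate, and no forward-iteration or telescoping contradiction exists there (the increments $(g'(\xi_n)-1)\nu_n+s$ need not be bounded below by a positive constant when $g'>1$ and $\nu_n<0$, and continuous stability supplies no rate $|\nu|\le Cs^{\alpha}$, only a modulus of continuity). Relatedly, the set $\{z:\nu(z,s)\le -s\}$ is neither forward nor backward invariant in general, so the symmetric ``two closed invariant sets'' framing cannot be run as stated.

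The mechanism that works, and is what the paper uses, is asymmetric. From your identity, $\nu(z,s)\ge 0$ implies $\nu(H(z),s)\ge s$ (in particular $\nu(z,s)=0$ forces $\nu(H(z),s)=s>0$, which kills $\nu\equiv 0$), while $\nu(z,s)<0$ implies $\nu(H^{-1}(z),s)<0$ (divide $\nu(z,s)-s<0$ by $g'>0$). Taking $z_0$ with dense orbit: either some iterate has $\nu\ge 0$, and then every later iterate has $\nu\ge s$, so density plus continuity give $\nu\ge s$ on all of $\la$; or every iterate has $\nu<0$, and then $\nu\le 0$ everywhere by density, hence $\nu<0$ everywhere. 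Rewriting your argument around this sign propagation recovers the paper's proof. Two further points you would still need to address: the conjugacy $\varphi_s$ is not known to preserve fibres, so your assertion $x'=x$ needs justification (or the inequality $g(z(s))>g(z)$ must be made robust to a small base displacement --- the paper is equally silent here); and in the negative branch the bound $\nu\le -s$ does not follow from one application of the backward step when $g'>2$, as the $g'\equiv 3$ example shows, so what one actually obtains there is a uniform strictly negative bound rather than $-s$ itself.
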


\begin{proof}
Observe first that if $z\in \la$ and we have that $z(s)$ verifies $\pi_\R(z(s))>\pi_\R(z)$ then $\pi_\R(H_s(z(s))) = g(z(s))+ s > g(z) + s = \pi_\R(H(z)) + s$ due to the fact that $g'(z) > 0$. This implies that $\nu( H(z),s) \geq s$.  In an analogous fashion if $\pi_\R(z(s))<\pi_\R(z)$ then $\nu( H^{-1}(z),s) \leq -s$.

Since $\varphi_s$ is continuous $\nu$ is also continuous. From the previous assertion if we take a point $z_0$ with dense orbit on $\la$ and there exist $n\in \Z$ with $\nu(H^n(z_0),s)>0$ then we conclude that $\nu(z,s)\geq s\ \forall z \in \la$. If  $\nu(H^n(z_0),s)<0$ then $\nu(z,s)\geq s\ \forall z \in \la$. Observe also that if $\nu(z,s)=0$ then  $\nu(H(z),s)>0$. From this we rule out the possibility of having $\nu(z,s)=0\ \forall z\in \la$ concluding the lemma.
\end{proof}

Let us focus our attention into proving theorem \ref{segpos}. Suppose now that $\la$ has dense periodic points and $H$ is also Kupka-Smale. 

It is a known fact that the continuation of a hyperbolic periodic points is as differentiable as $H$ due to the implicit function theorem. Given a periodic point $p$, if $p(s)$ is the continuation of $p$, we note $p'(s)$ the projection to $\R$ of the first derivative. From the equation $H_s(p(s))= H(p)(s)$ and taking the first derivative on $s$ we obtain the equation:
\[H(p)'(s)=g'_s(p(s)) p'(s) + 1.\]

Now using an inductive argument we can conclude that:
\begin{equation}\label{velPP}
p'(s)= \frac{\sum_{i=0}^{n-1} \prod_{j=i+1}^{n-1}g_s'(H^i(p)(s))}{1 - \prod_{i=0}^{n-1}g_s'(H^i(p)(s))}.
\end{equation}

Although we are not going to use directly the following result it is relevant to the understanding of the dynamics of a set which is stable by $E$-translations. Using the previous equation and the $\nu$-lemma we prove that: 

\begin{prop}\label{prop A}
Let $H(x,t)$ be a $C^1$ Kupka-Smale skew-product and $\la$ a compact invariant and transitive set such that $H$ preserves the $E$-orientation. If $\la$ is stable by $C^r$ $E$-translation and the periodic points are dense then all of the periodic points are attracting or all are repelling in the $E$-direction. Moreover the continuation of the periodic points remains being attracting or repelling depending on the initial circumstance. Also $\nu(z,s)>0 \Leftrightarrow$ all the periodic points are contractive in the $E$-direction.   
\end{prop}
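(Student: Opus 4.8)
The plan is to exploit the sign dichotomy furnished by the $\nu$-lemma together with the closed-form expression \eqref{velPP} for the derivative $p'(s)$ of the continuation of a hyperbolic periodic point. First I would fix a periodic point $p$ of period $n$ and set $\mu(p)=\prod_{i=0}^{n-1}g'(H^i(p))$, the multiplier of $p$ in the $E$-direction; by the Kupka--Smale hypothesis $\mu(p)\neq 1$, so $p$ is either contractive ($0<\mu(p)<1$) or repelling ($\mu(p)>1$) along $E$ (recall $g'>0$ since $H$ preserves the $E$-orientation, so $\mu(p)>0$). Evaluating \eqref{velPP} at $s=0$, the numerator $N(p)=\sum_{i=0}^{n-1}\prod_{j=i+1}^{n-1}g'(H^i(p))$ is a sum of positive terms, hence $N(p)>0$, while the denominator is $1-\mu(p)$. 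Therefore $p'(0)$ has the sign of $1-\mu(p)$: it is \emph{positive} precisely when $p$ is contractive and \emph{negative} precisely when $p$ is repelling. This is the key local computation linking the direction of the continuation to the contraction/expansion behaviour.

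Next I would connect $p'(0)$ to $\nu$. Since $p(s)$ is the (unique, by the implicit function theorem) continuation of $p$ as a hyperbolic periodic point, and $\varphi_s(p)$ is a periodic point of $H_s$ close to $p$ for small $s$, we get $\varphi_s(p)=p(s)$, so $\nu(p,s)=\pi_\R(p(s))-\pi_\R(p)= p'(0)\,s + o(s)$. Hence for small $s>0$, $\nu(p,s)>0$ iff $p'(0)>0$ iff $p$ is contractive in the $E$-direction, and $\nu(p,s)<0$ iff $p$ is repelling. Now invoke the $\nu$-lemma: either $\nu(z,s)\geq s$ for all $z\in\la$ and all small $s>0$, or $\nu(z,s)\leq -s$ for all such $z,s$. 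In the first case every periodic point must have $\nu(p,s)>0$, hence every periodic point is contractive in the $E$-direction; in the second case every periodic point is repelling. This proves the dichotomy ``all attracting or all repelling,'' and simultaneously the last assertion $\nu(z,s)>0 \Leftrightarrow$ all periodic points are contractive in the $E$-direction.

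For the ``moreover'' part — that the continuation $p(s)$ stays attracting (resp.\ repelling) — I would argue by continuity of the multiplier in $s$ together with the fact that a change of stability type would force $\mu_s(p(s))=1$ for some intermediate parameter $s_*$. At $s_*$ the point $p(s_*)$ is a non-hyperbolic periodic point of $H_{s_*}$; one can then apply the $\nu$-lemma to the stable system $H_{s_*}$ (whose $E$-orientation is still preserved, as $g'_s>0$ persists for small $s$) and the same sign analysis: near $s_*$ the continuation of $p(s_*)$ would have to reverse the sign of its velocity, contradicting the uniform sign $\nu(z,s)\geq s$ (resp.\ $\leq -s$) given by the $\nu$-lemma applied along the whole parameter interval. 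Alternatively, and more cleanly, observe that $\nu(p,s+s')$ near a fixed small $s'$ is controlled by the $\nu$-lemma for $H_{s'}$, which forces $\partial_s \pi_\R(p(s))$ to keep a constant sign in $s$, hence $\mu_s(p(s))$ never crosses $1$. The main obstacle I anticipate is making this last continuation-of-stability argument fully rigorous: one must ensure the $\nu$-lemma can be applied uniformly along the parameter family (i.e.\ that $H_s$ is again continuously stable by $E$-translations with the orientation-preserving property, with constants controlled locally in $s$), and that the implicit-function continuation $p(s)$ genuinely coincides with $\varphi_s$-image of $p$ for all relevant $s$, not just infinitesimally. Everything else is a direct sign bookkeeping from \eqref{velPP} and the $\nu$-lemma.
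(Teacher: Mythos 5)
Your proposal is correct and follows essentially the same route as the paper: it reads the sign of $p'(0)$ off equation (\ref{velPP}) (positive numerator, denominator $1-\mu(p)$), combines this with the uniform sign dichotomy of the $\nu$-lemma, and uses the Kupka--Smale hypothesis to rule out accumulation at $0$ of parameters where the stability type could switch. The paper's treatment of the ``moreover'' part is equally brief---it notes that a sign change of $p'(s)$ at some $s_0$ forces all periodic points to change simultaneously (by applying the $\nu$-lemma to $H_{s_0}$) and then restricts attention to $(-s_0,s_0)$---so the difficulty you anticipate in making that step fully rigorous is present in the original as well.
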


\begin{proof}
 Due to the hypothesis $g'_s>0$ in the equation \ref{velPP}, we have  $\sum_{i=0}^{n-1} \prod_{j=i+1}^{n-1}g_s'(H^i(p)(s))>0$ and also $\prod_{i=0}^{n-1}g_s'(H^i(p)(s)) > 0$. So $p'(s)< 0$ if and only if $\prod_{i=0}^{n-1}g_s'(H^i(p)(s)) > 1$. 

Now taking the Taylor polynomial of first degree on $p(s)$ and using the lemma \ref{nulema} we conclude that $p'(0)$ have all the same sign. If we have that $p'(s)$ changes sign at some $s_0$ (in $s_0$, $p(s)$ is not differentiable) then applying  lemma \ref{nulema}  to $H_{s_0}$ we conclude that all the periodic points change sign at $s_0$. In that case we can just restrict our study to $s\in (-s_0,s_0)$. Clearly those turning points can not accumulate over $0$ because we started at a Kupka-Smale skew-product. 
\end{proof}

Using the previous proposition and studying the evolution of the Lyapunov exponents we could prove a weaker version of Theorem $2$, asking $g''>0$ instead of $g''\geq 0 $ in a neighborhood of $\la$. A scheme to prove that is the following: The compactness of $\la$ implies that $g''>\delta >0$ for some $\delta>0$. This will imply that the Lyapunov exponents of the periodic points grow at a uniform speed. By this we mean that if 
\[ \lambda(p(s),H_s)= \frac{log \left(\prod_{i=0}^{per(p)-1} g_s'(H_s^i(p(s))\right)}{per(p)}, \]
then $\lambda(p(s),H_s) \geq  \lambda(p,H) + Cs$ for some $C>0$ if $\nu > 0$ or  $\lambda(p(s),H_s) \leq  \lambda(p,H) - Cs$ if $\nu < 0$. Now if all the periodic points are contracting they remain contracting and $\nu > 0$, therefore $\lambda(p,H) + Cs<0$. From this we conclude that the periodic points are a $E$-hyperbolic and this can be extended to its closure $\la$. The uniform growth can be obtained by computing $g'_s(H^i_s(p(s))$ through the Taylor polynomial of first degree of $g'$ at 
$H^i(p)$.

Using only the study of the continuation of the periodic points and the $\nu$-lemma we will prove Theorem \ref{segpos}.

\begin{proof}
Suppose that $\nu >0$. If $\la$ is not hyperbolic then there exist a point $z_0\in \la$ such that $lim inf \prod_{i=0}^{n-1}g'(H^i(z_0))\neq 0$. Since $g'>0$ we can assume there exist $\delta>0$ such that $\prod_{i=0}^{n-1}g'(H^i(z_0))>\delta >0$ for all $n>0$.

Since we are assuming that all the periodic points are contractive in the $E$-bundle, we can reformulate the equation \ref{velPP} of the speed of a periodic point obtaining that: 
\[p'(0)= \sum_{i=0}^{\infty} \prod_{j=1}^{i-1}g'(H^{-j}(p))\]

Using $z_0$, the density of the periodic points and the orientation preserving hypothesis we can construct a family of periodic points $p_n$ such that $\forall n >0$ there exist $K_n>0$ with  
\[\prod_{j=1}^{i-1}g'(H^{-j}(p_n)) > \delta/2\ \forall\ 0\leq i\leq K_n\ \forall n>0,\]
with $lim_n\ K_n = \infty$. 

This in particular implies that $p_n'(0) \geq K_n \delta/2$ and therefore $lim_n\ p_n'(0) = \infty$. We now use the hypothesis $g''\geq 0$ in a neighborhood of $\la$ to see that $g'(p(s))$ is a non-decreasing function for all periodic point. This two things implies that $lim_n p_n'(s)=\infty$ for $s>0$. We can take if necessary a sub sequence of such $p_n$ in order to converge to a point $z_1$. The stability hypothesis implies that $p_n(s)$ converges to $z_1(s)$, but the maps $p_n(s)$ can not converge point-wise to any map because $lim_n\ p_n'(s)=\infty$ obtaining a contradiction. 

The case $\nu <0$ is similar, the important remark is that we need to use another reformulation of the equation \ref{velPP} associated to expanding periodic points in the $E$-direction which is: 
\[p'(0)= \sum_{i=1}^{\infty} \prod_{j=0}^{i-1}g'(H^{j}(p))^{-1},\]
and the rest of the proof goes straightforward.
\end{proof}

Let us focus now into proving Theorem \ref{sch neg}. 

Suppose now that $H(x, t)= (h(x), g(x,t))$ is a $C^3$ skew-product. For every $x\in M$ we define the map $Sg_x:\R\to \R$ by  $\frac{g_x'''}{g_x''}-\frac{3}{2}
\left(\frac{g_x''}{g_x'}\right)^2.$ Suppose that $\la\subset M\times\R$ is a compact invariant transitive subset such there exist $U$ an open neighborhood of $\la$ where the map $Sg_{x|J_x} <0$ where $t\in J_x$ if $(x,t)\in U$. Let $H_s$ be the translation family and suppose that $\la$ is continuously stable by it.

To avoid notation, we denote any $g_x$ with $g$ and $\circ_{i=0}^n g_{h^i(x)}= g^n.$ It is a well known fact that the composition of maps with negative Schawartzian derivative has Schwartzian derivative; in particular,  $Sg^n<0.$

In what follows, if $z(s)$ is the continuation of a point $z\in \la$ we denote with $t_s = \pi_\R(z(s))$. Observe that $\nu(z,s) = t_s -t$. 

From standard arguments in hyperbolic theory, using that orientation preserving hypothesis it follows that if the dynamics in the E-direction is uniformly contracting (expanding), then for $s>0$, $t_s> t$ ($t_s< t$ respectively, cf. proposition \ref{prop A}). In the next lemma we prove a converse to that statement.  

\begin{lema} \label{main l}There exists a positive integer $n_0,$ and  $0<\lambda<1$ such that 
\begin{enumerate}
 \item if $\nu > 0$ for $s>0$ then for any $n>0$ there is $0< m < n_0$ such that $|(g^m)'(g^n(t))|<\lambda;$
 \item if $\nu < 0$ for $s>0$ then for any $n>0$ there is $0< m < n_0$ such that $|(g^{-m})'(g^{-n}(t))|<\lambda.$
\end{enumerate}
\end{lema}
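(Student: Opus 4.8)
The plan is to argue by contradiction using the negative Schwartzian derivative to produce uniform distortion control, and to combine this with the $\nu$-lemma applied to the continuation of points. I would treat case (1); case (2) is symmetric after replacing $H$ with $H^{-1}$. Suppose the conclusion fails: then for every positive integer $n_0$ and every $\lambda<1$ there is an $n$ such that $|(g^m)'(g^n(t))|\geq \lambda$ for all $0<m<n_0$. Taking $\lambda$ close to $1$ and $n_0$ large, this gives a point $y=g^n(t)$ whose forward orbit under the relevant compositions has derivatives staying close to $1$ for a long time; more precisely, along a sequence $n_k\to\infty$ and $n_0=n_0(k)\to\infty$ we obtain points whose forward iterates in the $E$-direction are almost isometries over arbitrarily long windows. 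The key tool here is the classical consequence of $Sg^m<0$ (Koebe-type distortion, or the fact that $\log|(g^m)'|$ has no interior positive local maximum along an orbit) which prevents the derivative from dipping well below $1$ and then coming back up; this will let me upgrade ``derivative $\geq\lambda$ on the window'' to genuine bounded distortion of $g^m$ on a definite-size interval $J$ around $y$, uniformly in $k$.

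The second ingredient is the $\nu$-lemma (Lemma \ref{nulema}): since $\la$ is continuously stable and $H$ preserves the $E$-orientation, either $\nu(z,s)\geq s$ for all $z\in\la$ and all small $s>0$, or $\nu(z,s)\leq -s$ for all such $z,s$; in the case $\nu>0$ we are in the former. Now I would run the computation that appears already in the proof of Proposition \ref{prop A}: the displacement $\nu$ accumulates along orbits weighted by products of $g'$, so if the backward (equivalently, the relevant) products of $g'$ stay bounded below on a long window of length $K_k\to\infty$ near the point $y=y_k$, then the conjugacy $\varphi_s$ must move $y_k$ (and its pushforwards) by an amount that grows without bound as $k\to\infty$, or — more robustly — the ``velocity'' $\partial_s$ of the continuation blows up, exactly as in the final contradiction of the proof of Theorem \ref{segpos}. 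Passing to a subsequence with $y_k\to z_1\in\la$, continuity of $\varphi_s$ forces $y_k(s)\to z_1(s)$, which contradicts the unbounded displacement. Hence the assumed failure is impossible and the lemma holds, with $n_0$ and $\lambda$ extracted uniformly from the compactness of $\la$ and the uniform bound on $Sg_{x}$ and on $g',g''$ over the neighborhood $U$.

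Concretely, the steps in order are: (i) set up the contradiction hypothesis and extract, for each $k$, a point $y_k=g^{n_k}(t)$ and a window length $m<n_0(k)$ with $n_0(k)\to\infty$ on which $|(g^m)'(y_k)|\geq\lambda_k\to 1$; (ii) use $Sg^m<0$ to promote this into uniform bounded distortion of $g^m$ on a fixed-size neighborhood of $y_k$, and in particular a uniform lower bound $|(g^i)'(y_k)|\geq\delta/2$ for all $i$ up to the window length; (iii) translate this lower bound, via the displacement-accumulation identity from Proposition \ref{prop A} / the velocity formula (\ref{velPP}) and the $\nu$-lemma, into a lower bound on $\nu$ (or on the one-sided velocity of the continuation) at $y_k$ that tends to $\infty$ with $k$; (iv) pass to a convergent subsequence $y_k\to z_1$ and invoke continuity of $\varphi_s$ to get $\nu(y_k,s)\to\nu(z_1,s)<\infty$, a contradiction.

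The main obstacle I expect is step (ii): turning the hypothesis ``the derivative of $g^m$ at the single point $y_k$ is $\geq\lambda$'' into genuine bounded distortion on an interval of definite size, uniformly over $k$ and over the base point in $\la$. This is where the negative Schwartzian derivative must be used in an essential way (the minimum principle for $|(g^m)'|$ along an orbit, together with the a priori bounds on the interval $J_x$ coming from the neighborhood $U$ of the compact set $\la$); without it one cannot rule out wild oscillation of the derivative that would invalidate the accumulation estimate in step (iii). Everything else is bookkeeping with the formulas already derived in the Kupka-Smale case and a compactness/continuity argument of the same shape as the one closing Theorem \ref{segpos}.
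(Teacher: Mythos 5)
Your plan diverges from the paper's argument in a way that opens two genuine gaps, and I do not see how to close them within your framework.

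The more serious gap is step (iii). You propose to convert a lower bound on the products $|(g^i)'(y_k)|$ into a blow-up of the ``velocity'' of the continuation, via the formula (\ref{velPP}) and the argument that closes Theorem \ref{segpos}. But that machinery is only available under the hypotheses of Theorem \ref{segpos}: dense periodic points, Kupka--Smale (so that the continuations $p(s)$ are differentiable in $s$ by the implicit function theorem), and $g''\geq 0$ (which is what lets one pass from $p_n'(0)\to\infty$ to $p_n'(s)\to\infty$ for $s>0$). Theorem \ref{sch neg}, which Lemma \ref{main l} serves, assumes none of these --- only transitivity, continuous stability, and negative Schwartzian. For a non-periodic point there is no velocity formula, and the displacement $\nu(z,s)=\pi_\R(\varphi_s(z))-\pi_\R(z)$ is a finite, continuous function of $z$ for each fixed $s$, so no contradiction can be extracted from ``unbounded displacement.'' Step (ii) is also unresolved, as you acknowledge: negative Schwartzian yields bounded distortion only on intervals whose \emph{images} stay short (inequality (\ref{dist}) requires $\ell(g^n(I))<\de$); knowing $|(g^m)'|\geq\lambda$ at the single point $g^n(t)$ does not manufacture a definite-size interval on which $g^m$ has controlled distortion, and the minimum principle for $|(g^m)'|$ controls the derivative on an interval from its values at \emph{both} endpoints, which you do not have.

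The ingredient you are missing is that the stability hypothesis itself supplies the intervals on which distortion control applies: one works with $I^n_s=[g^n(t),\,g^n(t)_s]$, the interval between a point and its continuation. An inductive argument gives $g^m(t_s)\leq g^m(t)_s$, so all forward images of $I^n_s$ have length bounded above by $\de$ (Lemma \ref{bounded interval}), while the $\nu$-lemma bounds them below by $\ga s$. If no iterate contracted the length of $I^n_s$ by a definite factor within a bounded time window, a limit would produce an interval $I$ with $\e<\ell(g^m(I))<\de$ for all $m$; then (\ref{dist}) bounds $|(g^n)'|$ above and below on $I$, the composition formula forces $S(g^n|_I)\to-\infty$, and a direct analysis of the sign and size of $(g^n)''$ and $(g^n)'''$ on $I$ shows this is incompatible with the derivative bounds --- that is the contradiction. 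Once the length of $I^n_s$ is contracted by the factor $\ga$, the distortion estimate converts this into $|(g^m)'(g^n(t))|<C\ga<\lambda$. I would encourage you to rebuild the proof around the continuation intervals $I^n_s$ rather than around periodic points.
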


Before giving the the proof of lemma \ref{main l} we show how it implies theorem \ref{sch neg}.

\noi{\em Lemma \ref{main l} implies theorem \ref{sch neg}:} Since $\La$ is transitive, there exists $z\in \la$ such that $\omega(t)=\La$ and $\al(t)=\La.$ If $t_s> t$, then the center direction along $\La$ is contracting; if $t_s< t$, then the center direction $\La$ is expanding. 

\qed

To prove lemma \ref{main l}, first in lemma \ref{bounded interval} we show that if for positive translation the continuation moves to the right (left) then there exists an interval that all its forward (backward) iterate has uniformed bounded length. Later, in lemma \ref{main l 2}, we prove that under the hypothesis of negative Schwartzian, the interval has its  lengths uniformly contracted  and therefore the derivative is contracting. In the last it is used the classical argument that gives uniform distortion for maps with negative Schwarzian derivative.

\begin{lema}\label{bounded interval} There exists $\de>0$ such that 
\begin{enumerate}
 \item if $t_s>t$ then for any $n>0$ and $m>0$ $\ell([g^m(g^n(t)), g^m(g^n(t)_s)])< \de$;
 \item if $t_s<t$ then  for any $n>0$ and $m>0$ $\ell([g^{-m}([g^{-n}(t)]_s), g^{-m}(g^{-n}(t))])< \de$.
\end{enumerate}
 
\end{lema}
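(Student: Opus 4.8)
The plan is to prove item (1); item (2) follows by replacing $H$ with $H^{-1}$ and reversing the roles of forward and backward iterates. Fix a point $z\in\la$ with $\pi_\R(z)=t$ and set $t_s=\pi_\R(z(s))$; by the $\nu$-lemma (Lemma \ref{nulema}), $t_s> t$ for all the points simultaneously, so in fact $\nu(w,s)\ge s$ for every $w\in\la$. The key structural observation is that the conjugacy $\varphi_s$ carries the $H$-orbit of $z$ to the $H_s$-orbit of $z(s)$, so for every $n,m$ the point $g^m(g^n(t))_s$ (meaning the $\R$-coordinate of $\varphi_s$ applied to $H^{m+n}(z)$) equals $g_s^{m}\big(g_s^{n}(t_s)\big)$, where $g_s^k$ denotes the composition of the perturbed fiber maps (with the ``$+s$'' translation built in) along the relevant $h$-orbit. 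Thus the interval in question is $\big[g^{m+n}(t),\,g_s^{m}(g_s^{n}(t_s))\big]$ — one endpoint is a genuine $H$-orbit point in $\la$, the other is an $H_s$-orbit point at distance at most $Cs^0$... more precisely at $\pi_\R$-distance at most $\operatorname{diam}$-bounded by the continuous-stability estimate $d(\varphi_s,i)\le$ (something going to $0$ with $s$), uniformly in $m,n$.

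The main point is then a monotonicity/ordering argument in the fiber. Because $g'>0$, each $g_x$ and each $g_{s,x}=g_x+s$ is an increasing diffeomorphism of the relevant interval of $\R$, hence so are all the compositions $g^k$ and $g_s^k$. Moreover $g_{s,x}(u)=g_x(u)+s> g_x(u)$ pointwise, so by a standard induction on $k$ one gets $g_s^{k}(u)\ge g^{k}(u)$ whenever both are defined, and in particular $g_s^{n}(t_s)\ge g^{n}(t_s)\ge g^{n}(t)=g^n(t)$ since $t_s\ge t$; applying the increasing map $g_s^m$ (and comparing with $g^m$) gives $g_s^{m}(g_s^{n}(t_s))\ge g^{m+n}(t)$. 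So the interval $I_{m,n}:=[\,g^{m+n}(t),\,g_s^{m}(g_s^{n}(t_s))\,]$ is genuinely an interval with left endpoint in $\la$, and the forward $g_s$-iterates of its right endpoint stay on the $H_s$-orbit of $z(s)$, which lives within the $s$-neighbourhood (in the $C^0$ sense coming from $\varphi_s$) of $\la$. I would now argue that $\ell(I_{m,n})$ cannot be large for small $s$: if for some sequence $s_k\to 0$ and some $m_k,n_k$ we had $\ell(I_{m_k,n_k})\ge\de_0>0$, then since the right endpoint is $\pi_\R(\varphi_{s_k}(H^{m_k+n_k}(z)))$ and the left endpoint is $\pi_\R(H^{m_k+n_k}(z))$, their difference is exactly $\nu\big(H^{m_k+n_k}(z),s_k\big)$, which by continuous stability tends to $0$ uniformly — contradiction. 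Hence there is $\de>0$ (depending only on the chosen uniform modulus of $d(\varphi_s,i)$ and an upper bound for $s$) with $\ell(I_{m,n})<\de$ for all $m,n>0$, which is the assertion.

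The step I expect to be the main obstacle is pinning down the uniform bound on $\nu(H^{m+n}(z),s)$: continuous stability only asserts $d(\varphi_s,i)\to 0$ as $s\to 0$, i.e. $\sup_{w\in\la}|\nu(w,s)|\to 0$, which is exactly uniform over the orbit, so in fact this is immediate once the reduction above is made — but one must be careful that $\de$ is chosen uniformly in $m,n$ and that the identification ``$g^m(g^n(t))_s=g_s^m(g_s^n(t_s))$'' is legitimate, i.e. that $\varphi_s$ really intertwines the two orbits and that all these compositions remain inside the neighbourhood $U$ where the fiber maps and their derivatives are controlled (which is guaranteed for $s$ small since the whole orbit stays $\de$-close to $\la\subset U$). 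Once $\de$ is fixed we also record that all the iterated intervals $[g^m(g^n(t)),g^m(g^n(t)_s)]$ for varying $m$ are nested images $g_s^m(I_{0,n})$ under increasing maps, so controlling one length controls the family — this is what will feed into Lemma \ref{main l 2}.
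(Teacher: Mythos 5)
Your argument contains all the right ingredients (monotonicity of the fibre maps, the pointwise inequality $g_s\geq g$, and the uniform smallness of $d(\varphi_s,i)$), but as written it bounds the wrong interval. The right endpoint in item (1) is $g^m(g^n(t)_s)$: one first takes the continuation $g^n(t)_s=\pi_\R(\varphi_s(H^n(z)))$ of the $n$-th iterate and then applies the \emph{unperturbed} map $g^m$ to it (this is unambiguous from the bracketed form $g^{-m}([g^{-n}(t)]_s)$ in item (2), and it is what Lemma \ref{main l 2} actually consumes, namely $\ell(g^m(I^n_s))<\de$ with $I^n_s=[g^n(t),g^n(t)_s]$). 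You instead identify the interval with $[\,g^{m+n}(t),\,g_s^m(g_s^n(t_s))\,]=[\,g^{m+n}(t),\,g^{m+n}(t)_s\,]$, i.e.\ you replace that endpoint by the continuation of the $(m+n)$-th iterate. The length of your interval is exactly $|\nu(H^{m+n}(z),s)|$, so bounding it is immediate from stability and has essentially no content; the point of the lemma is that the \emph{unperturbed} forward images of the interval between a point and its continuation stay uniformly bounded, which is not a tautology.

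The repair is one line and uses exactly the inequality you already wrote down. For any point of $\la$ with fibre coordinate $u$ and continuation $u_s\geq u$, induction on $m$ (using that each $g$ is increasing and $g_s=g+s\geq g$) gives $g^m(u_s)\leq g_s^m(u_s)=g^m(u)_s$, while monotonicity gives $g^m(u_s)\geq g^m(u)$. Applying this with $u=g^n(t)$ sandwiches $g^m(g^n(t)_s)$ between $g^{m+n}(t)$ and $g^{m+n}(t)_s$, so the interval of the lemma is contained in the one you controlled, and the uniform bound on $|\nu(\cdot,s)|$ finishes the proof. This sandwich is precisely the paper's ``inductive argument'' $g^n(t_s)\leq g^n(t)_s$ (applied at the point $H^n(z)$); once you insert it, your proof coincides with the paper's.
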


\begin{proof}By an inductive argument can be seen that $g^n(t_s)\leq g^n(t)_s$. Using this and that $d(g^n(t)_s, g^n(t))< \de.$ we conclude for the case $t_s>t$.  The proof for $t_s< t$ is similar.
\end{proof}

From the fact that $g$ and its composition has negative Schwartzian derivative bounded away from zero, it follows that there exists $C>0$ and $\de>0$  such that if $\ell(g^n(I))< \de $ then
\begin{eqnarray}\label{dist}
C^{-1}< \frac{|{g^n}'(t)|}{|{g^n}'(r)|} < C,\,\, t, r\in I.
 \end{eqnarray}

Let $\lambda<1$ and let $\ga>0$ be a positive constant such that $C\ga<\lambda<1.$ Let us denote with $I^n_s=[g^n(t), g^n(t)_s]$, i.e., the interval giving by a point and its continuation.

The proof of next lemma is by contradiction: first it is obtained an interval such that the length of its forward iterates remain bounded by above and below; later we show that this implies that the Schwartzian derivative in points in that intervals converges to minus infinity and this will give a contradiction with the hypothesis that the interval is bounded by above.

\begin{lema}\label{main l 2} There exists a positive integer $n_0,$  such that 
\begin{enumerate}
  \item if $t_s>t $ then for any $n>0$ there is $0< m < n_0$ such that $\frac{\ell(g^m(I^n_s))}{\ell(I^n_s)}<\ga;$
 \item if $t_s<t $ then for any $n>0$ there is $0< m < n_0$ such that $\frac{\ell(g^{-m}(I^{-n}_s))}{\ell(I^{-n}_s)}<\ga.$ 
\end{enumerate}
\end{lema}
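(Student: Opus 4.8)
\textbf{Proof plan for Lemma \ref{main l 2}.} The plan is to argue by contradiction, treating case (1) (the case $t_s>t$); case (2) is symmetric via the dynamics of $g^{-1}$. Suppose the conclusion fails. Then for every positive integer $N$ there is an exponent $n=n(N)$ such that the ratios $\ell(g^{m}(I^n_s))/\ell(g^{m-1}(I^{n}_s))$ \emph{do not} drop below $\ga$ for any $0<m\le N$; equivalently, writing $J_m=g^m(I^n_s)$, we have $\ell(J_m)\ge \ga^{-1}\ell(J_{m-1})$ fails to hold — more precisely $\ell(J_m)\ge \ga\,\ell(J_{m-1})$ for all $0<m\le N$ would only give slow contraction, so I instead negate the statement directly: for all $0<m<n_0$, $\ell(g^m(I^n_s))/\ell(I^n_s)\ge \ga$. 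Iterating this along a block of length $n_0$, and then along consecutive blocks, I obtain an interval $I=I^n_s$ whose forward iterates $g^m(I)$ satisfy $\ell(g^m(I))\ge \ga^{m}\ell(I)$, so they contract at most geometrically with rate $\ga$, while by Lemma \ref{bounded interval} they are bounded above by $\de$ for all $m$. Thus the lengths $\ell(g^m(I))$ stay trapped between an explicit lower bound shrinking at controlled rate and the fixed upper bound $\de$ — and in particular, since they are a nonincreasing-up-to-distortion sequence bounded below along long blocks, a compactness/pigeonhole argument over the base dynamics and over $n$ produces (passing to a limit of the intervals and base points, using $r\ge 3$ smoothness) a limiting interval $I_\infty$ of positive length all of whose forward iterates have length in a fixed compact subinterval of $(0,\de]$.

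The next step is to extract the contradiction from negative Schwartzian derivative. On an interval $I$ with $\ell(g^m(I))<\de$, the distortion bound \eqref{dist} applies to every $g^m$. The classical minimum principle for maps with negative Schwartzian derivative says $|(g^m)'|$ has no positive local minimum in the interior of $I$, hence attains its minimum over $\overline{I}$ at an endpoint; combined with \eqref{dist} this forces, for $t$ in the middle of $I$, a lower bound $|(g^m)'(t)|\ge C^{-1}\max(|(g^m)'(a)|,|(g^m)'(b)|)$ where $a,b$ are the endpoints. Summing the lengths $\ell(g^m(I))=\int_I |(g^m)'|$ against the upper bound $\de$ and the fact that they do not tend to $0$, one deduces that $\sum_m \ell(g^m(I))=\infty$ is incompatible with the derivatives along the orbit — more to the point, I will use the standard argument (as in Singer/de Melo–van Strien) that an interval whose forward iterates are all bounded and do not shrink must, under negative Schwartzian, have the Schwartzian derivative of the iterates $S(g^m)$ blowing up to $-\infty$ at interior points, because $S(g^m)=\sum_{j} (g^j)'{}^2 \cdot (Sg)(g^j(t))$ and the hypothesis gives $Sg\le -\kappa<0$ on the relevant compact region while the factors $(g^j)'{}^2$ cannot all be small (their "sum" controls $\ell(I)$ staying positive). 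Having $S(g^m)\to-\infty$ at an interior point of an interval of definite length contradicts the fact that $g^m(I_\infty)$ is an interval of length bounded \emph{above} by $\de$: a map with very negative Schwartzian on a fixed-length interval must have very large image, which is the clean contradiction.

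Concretely the order of operations is: (i) negate the conclusion and iterate over blocks of length $n_0$ to get an interval with geometrically-slow contraction of all forward iterates; (ii) invoke Lemma \ref{bounded interval} to cap all those lengths by $\de$; (iii) pass to a limit (compactness of $\La$, $C^3$ convergence of the fiber maps) to get a positive-length interval $I_\infty$ with all forward iterates of length in $[c,\de]$ for some $c>0$; (iv) apply the cocycle formula $S(g^m)(t)=\sum_{j=0}^{m-1}\big((g^j)'(t)\big)^2\,(Sg_{h^j(x)})\big(g^j(t)\big)$ together with $Sg\le-\kappa<0$ on $U$ to see $S(g^m)\to-\infty$ on the interior of $I_\infty$ as $m\to\infty$; (v) conclude that this is incompatible with $\ell(g^m(I_\infty))\le\de$, using the elementary fact that a one–dimensional map with sufficiently negative Schwartzian derivative on an interval of length $\ge \ell(I_\infty)/2$ expands that interval past length $\de$. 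Finally, fixing $\ga$ with $C\ga<\lambda$ as in the statement preceding the lemma closes the loop with Lemma \ref{main l}, since then $|(g^m)'(g^n(t))|\le C\,\ell(g^m(I^n_s))/\ell(I^n_s)<C\ga<\lambda$ by the distortion estimate \eqref{dist} applied on $I^n_s$.

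\textbf{Main obstacle.} The delicate point is step (iii)–(v): turning "all forward iterates of a fixed interval stay bounded and do not shrink" into a quantitative blow-up of the Schwartzian, uniformly enough that it contradicts the \emph{upper} length bound $\de$ rather than merely some qualitative statement. One has to be careful that the lower bound $c>0$ on $\ell(g^m(I_\infty))$ really survives the limit — it does, because the negation in step (i) is uniform in $m$ along each block — and that the points where $Sg$ is evaluated stay in the neighborhood $U$ where $Sg<0$, which is exactly what Lemma \ref{bounded interval} guarantees by keeping the iterates inside the $\de$-neighborhood of $\La$. The rest is the classical negative-Schwartzian distortion machinery, which I would cite rather than reprove.
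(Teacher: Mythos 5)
Your steps (i)--(iv) follow the paper's argument closely: negate the conclusion, use Lemma \ref{bounded interval} plus the $\nu$-lemma to extract (by taking accumulation points) an interval $I$ with $\e<\ell(g^m(I))<\de$ for all $m$, deduce the two-sided derivative bound \eqref{bounded der} from the distortion estimate \eqref{dist}, and then use the cocycle formula for the Schwartzian together with $Sg\le -\kappa<0$ and $(g^j)'\ge \be^{-1}$ to conclude $S(g^n)\to-\infty$ on $I$. The reduction of Lemma \ref{main l} to this statement via $|(g^m)'(g^n(t))|\le C\,\ell(g^m(I^n_s))/\ell(I^n_s)$ is also the intended one.

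The gap is in step (v), which is where all the actual work of the paper's proof lives. The ``elementary fact'' you invoke --- that a map with sufficiently negative Schwartzian derivative on an interval of definite length must have a large image --- is false. Take $f(x)=\de\,\frac{e^{\lambda x}-1}{e^{\lambda L}-1}$ on $[0,L]$: it maps $[0,L]$ onto $[0,\de]$ exactly, while $Sf\equiv-\lambda^2/2\to-\infty$ (the Schwartzian is unchanged by post-composition with affine maps). So $S(g^n)\to-\infty$ is perfectly compatible with $\ell(g^n(I))\le\de$; what it is \emph{not} compatible with is the two-sided derivative bound \eqref{bounded der} (equivalently, bounded distortion): in the example above $f'(L)/f'(0)=e^{\lambda L}\to\infty$. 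The contradiction must therefore be drawn against $\be^{-1}<|(g^n)'|<\be$, not against the upper length bound. This is precisely what the paper's two-case analysis of $(g^n)''$ accomplishes (if $(g^n)''$ vanishes in the interior, one finds points where $S(g^n)$ is near zero; if not, an integration of $(g^n)'''<\tfrac12 L_n (g^n)''$ forces $(g^n)''$ to change sign). A cleaner route to the same end, if you prefer to avoid the case analysis: writing $v=|{(g^n)}'|^{-1/2}$, the identity $v''=-\tfrac12 S(g^n)\,v$ shows $v''\ge K_n v$ with $K_n\to\infty$, so $v$ is convex and grows at least like $\cosh(\sqrt{K_n}\,\ell(I)/2)$ from its minimum to one endpoint, whence $\max|{(g^n)}'|/\min|{(g^n)}'|\to\infty$, contradicting \eqref{bounded der}. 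As written, your step (v) does not close the argument.
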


\begin{proof}
It is enough to prove one of the item. Let us prove the first one. If it is false, for any $n$ there exists $t$ such that for any $m< n$ follows that $\ell(g^m(I_s))>\ga\ell(I_s), $ and by lemma \ref{nulema} it follows that $\ell(g^m(I_s))>\ga. s$ and therefore using the lemma \ref{bounded interval}, taking an accumulation point of the intervals, it follows that there exists an interval $I$ with one of its extreme in $\La$  and $\e>0$ such that 
\begin{eqnarray}\label{int}
\e <\ell(g^m(I))< \de,\,\,\forall \,\,m>0.
 \end{eqnarray}
 From the distortion as in (\ref{dist}) it follows that there is $\be$ such that  for any $r\in I$ follows that 
\begin{eqnarray}\label{bounded der}
\beta^{-1}< |(g^n)'(r)|<\be. 
\end{eqnarray}
Using the formula of the Schwartzian derivative of the composition, $S(f\circ g)= S(f)\circ g (g')^2+ Sg$ and inequality \ref{bounded der} it follows that  
\begin{eqnarray}\label{sf neg} 
S(g^n/I)\to -\infty.
 \end{eqnarray}
 In fact, 
$$S(g^n)= \sum_{i=0}^{n-1} Sg(g^i(r))[(g^i)'(r)]^2\leq S[\sum_{i=0}^{n-1}\be^{-2}]\leq  S\be^{-2}n$$ where $S<0$ is a negative upper bound of $Sg$.  

To conclude the lemma, we will get a contradiction with (\ref{sf neg}). To do that, we analyze the second derivative of $g^n$ and we separate it into cases.

First observe that $(g^n)''_{/I}$ can not be identically zero; if this is the case, $(g^n)'$ would be linear and therefore the $Sg^n=0$, a contradiction.

\noi{\em Case 1. $(g^n)''$  has a zero in the interior of $I$:} since $(g^n)''$ is not identically zero in the interval, there exists either a point $r_0$ and $\e>0$ such that $(g^n)''(r_0)=0$ and  $(g^n)''_{/(r_0, r_0+\e)}>0$ or  a point $r_0$ and $\e>0$ such that $(g^n)''(r_0)=0$ and  $(g^n)''_{/(r_0, r_0+\e)}<0.$ In the former, there exists $r_1$ arbitrarily close to $r_0$ such that $(g^n)'''(r_1)>0;$ in the later, there exists $r_1$ arbitrarily close to $r_0$ such that $(g^n)'''(r_1)<0;$ in both  cases, observe that $\frac{(g^n)'''(r_1)}{(g^n)''(r_1)}>0$, and since $(g^n)''(r_1)$ is small and $(g^n)'(r_1)$ is bounded by below it follows that  $Sg^n(r_1)=\frac{(g^n)'''(r_1)}{(g^n)''(r_1)}-\frac{3}{2}(\frac{(g^n)''(r_1)}{(g^n)'(r_1)})^2$ is arbitrary close to zero, contradicting (\ref{sf neg}).

\noi{\em Case 2. $(g^n)''$  has no  zero in the interior of $I$:} Let us assume that $(g^n)''>0.$ Let $L_n\to -\infty$ such that $Sg^n< L_n.$ We take $0<a< \frac{1}{2}$ and we consider the interval $J_n=\{r: (g^n)''(z)< (-L_n)^a\}.$ This interval $J_n$ is non empty due to the bounded distortion. We claim that
\begin{eqnarray}\label{tercera}
 (g^n)'''_{/J_n}<\frac{1}{2}L_n(g^n)''
\end{eqnarray}
 and in particular $(g^n)'''$ is negative in $J_n$; in fact 
$(g^n)'''< [L_n+ \frac{3}{2}(\frac{(g^n)''}{(g^n)'})^2](g^n)''<[L_n+\frac{3}{2\be^2}(-L_n)^{2a}](g^n)''< L_n[1+\frac{3}{2\be^2}(L_n)^{2a-1}](g^n)'',$ since $(g^n)''>0$ and $a<\frac{1}{2}$ and so  $1+\frac{3}{2\be^2}(L_n)^{2a-1}>\frac{1}{2}$ for large $L_n$ it follows that $(g^n)'''_{/J_n}<\frac{1}{2}L_n(g^n)''.$ In particular, the second derivative on $J_n$ is decreasing and therefore $J_n$ is  a connected interval $(r_n, t_1)$  with $r_n\to t_0$ where $t_0<t_1$ are  the extremal point of $I:$ to prove the last claim observe that if $r\notin J_n$,  then $(g^n)''(r)>-L_n^a$ and since $(g^n)'(y)=(g^n)'(0)+\int_{0}^y (g^n)''(s)ds$ in particular it  follows that  $(g^n)'(r_n)\geq (g^n)'(0)+ r_n (-L_n^a),$ and if $r_n$ does not converge to $t_0$, using that the inequality (\ref{bounded der}) we get a contradiction with (\ref{dist}). Now, using inequality (\ref{tercera}) and that the second derivative is decreasing, it follows that   
\begin{eqnarray*}
(g^n)''(r)&=&(g^n)''(r_n)+ \int^{r_n}_r (g^n)'''(s)ds\\
& \leq& (g^n)''(r_n)+ \frac{1}{2}L_n\int^{r_n}_r (g^n)''(s)ds\\
  & \leq  & (g^n)''(r_n)+ \frac{1}{2}L_n (g^n)''(r_n)(r-r_n)\\
  & \leq & (g^n)''(r_n)[1+\frac{1}{2} L_n(r-r_n)],
\end{eqnarray*}
 so taking $r=t_1$ it follows that  $(g^n)''(t_1)< (g^n)''(r_n)[1+\frac{1}{2} L_n(t_1-r_n)],$
 which is negative since $L_n$ and $t_1-t_0>\frac{s}{2},$ contradicting that $(g^n)''>0.$ 
% \begin{eqnarray*}
%  (f^n)'(y)&=& (f^n)'(x_n)+\int_{t_n}^y (f^n)''(z)dz \\
% & > & (f^n)'(t_n)+(f^n)'(1)(y-t_n)+ \frac{1}{4}(-L_n)[(1-t_n)^2-(1-y)^2],
% \end{eqnarray*}
% since $t_\to t_0$ and $-L_n\to \infty$ using inequality (\ref{bounded der}) we    
%  contradic again (\ref{dist}).

The case that $(g^n)''$ has non zero in the interior of $I$ and  $(g^n)''<0$ is similar: taking $J_n=\{r: (g^n)''(r) > L_n^a\}$ it is concluded that is a connected interval that converges to $(t_0, t_1)$ and  in that interval $(g^n)'''_{/J_n}>\frac{1}{2}L_n(g^n)''$ and repeating as before, it is concluded that $(g^n)''(t_1)$ is positive. 
\end{proof}

\setlength{\parindent}{0 cm}

Javier Correa\\
Universidad Federal Fluminense,\\
Rua M\'ario Santos Braga S/N Valonguinho, 24020-140, Rio de Janeiro, Brazil.\\
\textbf{jacorrea88@gmail.com}\\

Enrique R. Pujals\\
Instituto de Matem\'atica Pura e Aplicada,\\
Estrada Dona Castorina 110, 22460-320,  Rio de Janeiro, Brazil.\\
\textbf{enrique@impa.br}\\


\begin{thebibliography}{99}

\bibitem [CSY] {CSY11}
S. Crovisier, M. Sambarino, D. Yang,
Partial Hyperbolicity and Homoclinic Tangencies.
arXiv:1103.0869.


\bibitem [F] {Fr72}
J. Franks,
Differentiable $\Omega$-stable diffeomorphisms.
\emph{Topology.} 11, (1972), 107-113.

\bibitem [G] {Gu72}
J. Guckenheimer,
Absolutely $\Omega$-stable diffeomorphisms.
\emph{Topology.} 11, (1972), 195-197.

\bibitem [M1] {Ma75}
R. Ma\~n\'e, 
On infinitessimal and Absolute Stability of diffeomorphisms.
\emph{Dynamical systems - Warwick 1974 (Proc. Sympos. Appl.  Topology and Dynamical Systems, Univ. Warwick,  Coventry,  1973/1974; presented to E. C. Zeeman on his fiftieth birthday). Lecture Notes in Math., Springer, Berlin,} 468, (1975), 151-161.

\bibitem[M2]{Ma85}
  R. Ma\~n\'e,
  Hyperbolicity, Sinks and Measure in One Dimensional Dynamics.
  \emph{Commun. Math. Phys.} \textbf{100} (1985), 495-524.

\bibitem [M3] {Ma88}
 R. Ma\~n\'e, 
A proof of the $C^1$ stability conjecture. 
\emph{Inst. Hautes Études Sci. Publ. Math.} No. 66, (1988), 161-210. 

\bibitem [Pa] {Pa88}
J. Palis,
On the $C^1$ $\Omega$-stability conjecture.
\emph{Inst. Hautes Études Sci. Publ. Math.} No. 66, (1988), 211-215.

\bibitem [Pu] {Pu06}
E. Pujals,
On the density of hyperbolicity and homoclinic bifurcations for 3D diffeomorphism in attracting regions.
\emph{ Discrete and Continuous Dynamical Systems.} 16 No. 1, (2006), 179-226.


\bibitem [Pu2] {Pu08} E. Pujals, Some simple questions related to the $C^r$ stability
conjecture. \emph{Nonlinearity.} 21, (2008), 233–237.



\bibitem [PS] {PaSm68}
J. Palis, S. Smale,
Structural Stability Theorems, Global Analysis.
\emph{Proc. Sympos. Pure Math., Vol. XIV, Berkeley, Calif.,} (1968),  223-231, Amer. Math. Soc., Providence, R.I. 

\bibitem [R1] {Ro71}
J. Robbin,
A structural stability theorem.
\emph{Ann. of Math.} 94. No. 3, (1971), 447-493. 

\bibitem [R2] {Ro76}
C. Robinson,
Structural stability of $C^1$ diffeomorphisms.
\emph{Journal of Diff. Eq.} 22(1976), 28-73. 

	
\bibitem[Si]{Si78}	
	D. Singer, Stable orbits and bifurcation of maps of the interval.
 \emph{SIAM J. Appl. Math.}, \textbf{35} (1978), 260-267.

\bibitem [S1] {Sm67}
S. Smale,
Differentiable dynamical systems. 
\emph{Bull. Amer. Math. Soc.} 73, (1967), 747-817.

\bibitem [S2] {Sm70}
S. Smale,
The $\Omega$-stability theorem. 
\emph{Proc. A.M. S. Symp Pure Math} 14 (1970), 289-297.


\bibitem[So] {So12}
J. Solano,
Non-uniform hyperbolicity and existence of absolutely continuous invariant measures.
arXiv:1212.3820.

\bibitem [Ti] {Ti11}
S. Tikhomirov,
H\"older Shadowing on Finite Intervals.
arXiv:1106.4053.

\bibitem [To] {To13}
D. Todorov, 
Generalizations of analogs of theorems of Maizel and Pliss and their application in Shadowing Theory. 
\emph{Discr. Cont. Dyn. Syst.}, ser. A 33, (2013), 4187-4205.

\end{thebibliography}
\end{document}